\theoremstyle{change}%
\newtheorem{definition}{Definition:}[section]%
\newtheorem{proposition}[definition]{Proposition:}%
\newtheorem{theorem}[definition]{Theorem:}%
\newtheorem{lemma}[definition]{Lemma:}%
\newtheorem{corollary}[definition]{Corollary:}%
{\theorembodyfont{\rmfamily}\newtheorem{remark}[definition]{Remark:}}%
{\theorembodyfont{\rmfamily}}%
\newenvironment{proof}
{{\bf Proof:}}
{\qquad \hspace*{\fill} $\Box$}%
\newcommand{\fg}{\mathfrak{g}}%
\newcommand{\fz}{\mathfrak{z}}%
\newcommand{\Ad}{\operatorname{Ad}}%
\newcommand{\ad}{\operatorname{ad}}%
\newcommand{\tr}{\operatorname{tr}}%
\newcommand{\id}{\operatorname{id}}
\newcommand{\inner}{\operatorname{int}}%
\newcommand{\fix}{\operatorname{fix}}%
\newcommand{\rme}{\mathrm{e}}%
\newcommand{\EC}{\mathcal{E}}%
\newcommand{\NC}{\mathcal{N}}%
\newcommand{\OC}{\mathcal{O}}%
\newcommand{\RC}{\mathcal{R}}%
\newcommand{\UC}{\mathcal{U}}%
\newcommand{\XC}{\mathcal{X}}%
\newcommand{\DC}{\mathcal{D}}%
\newcommand{\HC}{\mathcal{H}}%
\newcommand{\N}{\mathbb{N}}%
\newcommand{\R}{\mathbb{R}}%
\begin{document}

	\title{Dynamics of linear control systems and stabilization}%
	\author{V{\'\i}ctor Ayala \\Instituto
de Alta Investigaci{\'o}n\\Universidad de Tarapac{\'a}, Arica, Chile 
\and
Adriano Da Silva\footnote{adasilva@academicos.uta.cl }\\Departamento de Matem{\'a}tica,\\Universidad
de Tarapac{\'a}, Arica, Chile}

	\date{\today }
	\maketitle

\begin{abstract}
In this paper, we study linear control systems with positive bounded orbits. We show that the existence of positive bounded orbits imposes strong algebraic and topological constraints on the state space. In fact, a linear control system has bounded positive orbits if and only if it can be decomposed as the product of the stable and central subgroups of the drift (see Section 2.2 below), with the central subgroup being compact. In particular, systems with bounded positive orbits admit a compact control set, and if the system is controllable, the entire state space is a compact group. As a byproduct, we obtain a complete characterization of the internal and BIBO stability of linear control systems.
\end{abstract}

 {\small {\bf Keywords:} linear control systems, Lie groups, stability} 
	
 {\small {\bf Mathematics Subject Classification (2020): 93D99, 93C05, 20F40.}}%

\maketitle

\section{Introduction}

The study of dynamical systems on a Lie group $G$, and in particular the class of linear control systems on $G$, plays a relevant role in geometric control theory. These systems evolve on manifolds 
endowed with a rich algebraic structure, and their analysis requires a careful 
combination of differential geometry, Lie theory, and dynamical systems methods. 
A defining feature of these dynamics is that a linear vector field serves as the drift of a linear control system whose flow forms a one-parameter group of automorphisms of the underlying Lie group. This structure guarantees that the system’s evolution is consistent with the intrinsic symmetries of the state space. Although such control systems are primarily of theoretical interest, it has been shown \cite[Theorem 4]{JPh1} that, under suitable conditions, general nonlinear control-affine systems can be globally equivalent to linear control systems on Lie groups or homogeneous spaces, which highlights their applied relevance as well.
In the present work, we study linear control systems with bounded orbits. We begin by analyzing bounded orbits associated with the trivial control, or equivalently, with the flow of the drift. In particular, we focus first on orbits starting at points in the central subgroup. Using the Jordan decomposition of linear vector fields on Lie groups, we show that an orbit starting at a point in the central subgroup is bounded if and only if that point belongs to the set of fixed points of the nilpotent part. As a consequence, all points in the central subgroup have bounded orbits if and only if the restriction of the flow to this subgroup is elliptic.

This description allows us to establish that the stable and unstable manifolds of the flow at the identity element coincide precisely with the stable and unstable subgroups of the flow. In particular, any orbit starting in the stable manifold is bounded for positive time, while those starting in the unstable manifold are bounded for negative time. Subsequently, we extend these results to bounded orbits associated with arbitrary bounded controls. Our main theorem shows that the orbits in positive time of a linear control system are bounded for any control if and only if the entire positive orbit from the identity is bounded, which holds precisely when the group decomposes as the product of the stable and central subgroups, with the central subgroup compact. In particular, systems with bounded positive orbits admit a compact, positively invariant control set, implying that, in this setting, controllability of the system entails compactness of the group.
As an application of the previous dynamical analysis, we consider the stability of linear control systems. Stability plays a central role in control theory due to its wide practical relevance. Examples include gyroscopic stabilization \cite{BlochMarsden1990}, quantum control systems \cite{D'Alessandro2008}, and observer or output design via homomorphisms. The bounded-input, bounded-output (BIBO) criterion ensures that bounded controls produce bounded outputs, providing a practical test of robustness in applications such as sensor networks and autonomous vehicles \cite{Khalil2002, ThrunBurgardFox2005}.

Despite its importance, stability in the context of linear control systems on Lie groups has not yet been fully addressed. The notions of internal stability and bounded-input, bounded-output (BIBO) stability, well established in the Euclidean setting (see for instance \cite{Bacc, Sab}), remain to be extended and characterized in this more general framework. In this work, we provide such a generalization and, by leveraging the results obtained for bounded orbits, give a complete characterization of these stability notions. This approach is inspired by the Euclidean case, where internal and BIBO stability are closely related to the dynamical properties of the system’s drift \cite[Chapter 6]{Bacc}.

The paper is organized as follows. Section 2 reviews linear dynamics on vector spaces and linear vector fields on Lie groups, establishing several structural lemmas. In particular, we introduce the Jordan decomposition of a linear vector field and its associated flow. Section 3 employs this decomposition to study bounded orbits of linear vector fields and to characterize the stable, unstable, and central subgroups from a dynamical perspective. Section 4 examines bounded orbits for linear control systems, showing that the existence of such orbits imposes strong topological constraints on the underlying group. Finally, Section 5 introduces the notions of internal and BIBO stability and applies the preceding dynamical results to characterize these stability concepts in terms of the stable, unstable, and central subgroups.

\subsection*{Notations}

If $H \subset G$ is a subgroup, we denote by $H_1$ the connected component of $H$ containing the identity element $e \in G$. 
If $H = \{e\}$, we say that $H$ is a \emph{trivial} subgroup of $G$. For each $g \in G$, we denote by $L_g$ and $R_g$ the left and right translations by $g$, respectively. 
The conjugation by $g$ is the map $C_g := L_g \circ R_{g^{-1}}$. The center of $G$, denoted by $Z(G)$, is the set of all elements $g \in G$ such that $C_g = \id_G$. Its Lie algebra $\fz(\fg)$ is the set of all elements in $\fg$ that have trivial brackets with any other element of $\fg$. If $f : G \to H$ is a differentiable map between Lie groups, the differential of $f$ at $x \in G$ is denoted by $(df)_x$. If $\varphi:\R\rightarrow G\rightarrow G$ is a flow on $G$. The set of \emph{fixed points} and the set of \emph{recurrent points} of $\varphi$ are defined, respectively, by
$$\fix(\varphi):=\{g\in G; \varphi_t(g)=g, \forall t\in\R\}\hspace{.5cm}\mbox{ and } \hspace{.5cm}\RC(\varphi):=\{g\in G; \exists t_k\rightarrow+\infty; \varphi_{t_k}(g)\rightarrow g\}.$$
We say that a subset $A\subset G$ is $\varphi$-positively (negatively), invariant, if $$\varphi_t(A)\subset A,\hspace{.5cm} \forall t>0 \hspace{.5cm}(\varphi_t(A)\subset A, \hspace{.5cm}\forall t<0).$$
The subset $A$ is said to be $\varphi$-invariant if it is $\varphi$-positively and negatively invariant

\section{Preliminaries}

In this section we present some basic results concerning basic properties of linear vector fields that will be needed for the rest of the paper.

\subsection{Matrices dynamics on vector spaces}

Let $V$ be a finite-dimensional real vector space and $A:V\to V$ a linear map. 
Recall that $A$ is said to be \emph{semisimple} if its complexification $A_{\mathbb{C}}$ is diagonalizable. 
The map $A$ is \emph{nilpotent} if $A^n=0$ for some $n\in\mathbb{N}$, and \emph{elliptic} (respectively, \emph{hyperbolic}) if it is semisimple and all its eigenvalues are purely imaginary (respectively, real).  

The \emph{Jordan decomposition} of $A$ is given by
\[
A = A_{\EC} + A_{\HC} + A_{\NC}, 
\qquad 
\text{where } A, A_{\EC}, A_{\HC}, \text{ and } A_{\NC} \text{ commute},
\]
with $A_{\EC}$ elliptic, $A_{\HC}$ hyperbolic, and $A_{\NC}$ nilpotent.

For each eigenvalue $\lambda$ of $A_{\HC}$, consider 
\[
V_{\lambda} := \{ v \in V \; ; \; A_{\HC} v = \lambda v \}
\]
be the corresponding eigenspace. Define the $A$, $A_{\EC}$, $A_{\HC}$, and $A_{\NC}$-invariant subspaces of $V$ by
\[
V^+ = \bigoplus_{\lambda>0} V_{\lambda}, 
\qquad
V^0 = \ker A_{\HC},
\qquad
V^- = \bigoplus_{\lambda<0} V_{\lambda}.
\]
Then $V = V^+ \oplus V^0 \oplus V^-$, and for any norm on $V$ there exist constants $\lambda>0$, $c \geq 1$ such that
\begin{equation}
\label{expanding}
\|\mathrm{e}^{tA}|_{V^+}\| \geq c\mathrm{e}^{t\lambda},
\qquad
\|\mathrm{e}^{tA}|_{V^-}\| \leq c^{-1}\mathrm{e}^{-t\lambda},
\qquad
\text{for all } t \geq 0.
\end{equation}
Moreover, there exists an inner product $\langle \cdot, \cdot \rangle$ on $V$ such that $\mathrm{e}^{tA_{\EC}}$ is an isometry for every $t\in\mathbb{R}$.

The following lemma, whose proof can be found in \cite[Lemma~2.1]{ASVAPJ}, will be considered essential in the study of linear vector fields.

\begin{lemma}
\label{beta}
Let $\beta:\mathbb{R} \to V$ be a continuous curve satisfying
\[
\beta_{t+s} = \beta_t + \mathrm{e}^{tA}\beta_s, \qquad \forall\, t,s\in\mathbb{R}.
\]
If $A$ has no elliptic part and the sequence $(\beta_{t_k})_{k\in\mathbb{N}}$ is bounded for some $t_k \to \pm\infty$, 
then $\beta_t \in V^{\mp}$ for all $t\in\mathbb{R}$.  
In particular, $(\beta_t)_{t\ge0}$ is bounded.
\end{lemma}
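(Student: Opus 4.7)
My plan is to derive an explicit integral representation $\beta_t = \int_0^t e^{sA} v\, ds$ for some $v \in V$ and then read the conclusion off the dynamics on each piece of the splitting $V = V^+ \oplus V^0 \oplus V^-$. Setting $t = s = 0$ in the cocycle gives $\beta_0 = 0$. To upgrade continuity to smoothness, I would introduce $G := \int_0^1 \beta_u\, du$ and use the cocycle to rewrite
\[
\int_s^{s+1} \beta_u\, du \;=\; \int_0^1 \beta_{s+u}\, du \;=\; \beta_s + e^{sA} G.
\]
The left-hand side is $C^1$ in $s$ by the fundamental theorem of calculus, and $e^{sA}G$ is $C^\infty$, so $\beta_s = \int_s^{s+1}\beta_u\, du - e^{sA}G$ is $C^1$; bootstrapping gives $\beta \in C^\infty$. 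Differentiating the cocycle in $s$ at $s = 0$ then yields $\beta'(t) = e^{tA} v$ with $v := \beta'(0)$, and hence the integral form.

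With this representation in hand, I would decompose $v = v^+ + v^0 + v^-$ along $V = V^+ \oplus V^0 \oplus V^-$; each summand is $A$-invariant, so $\beta_t = \beta_t^+ + \beta_t^0 + \beta_t^-$ accordingly. Because $A$ has no elliptic part, $A|_{V^\pm}$ is invertible (its eigenvalues are the nonzero real eigenvalues of $A_\HC$), so $\beta_t^\pm = (A|_{V^\pm})^{-1}(e^{t A|_{V^\pm}} - I)\, v^\pm$, while $A|_{V^0} = A_\NC|_{V^0}$ is nilpotent, making $\beta_t^0$ a polynomial in $t$ that vanishes at $t = 0$. In the case $t_k \to +\infty$, the lower bound $\|e^{tA}|_{V^+}\| \geq c\,e^{t\lambda}$ from (\ref{expanding}) forces $v^+ = 0$, and a polynomial vanishing at $0$ and bounded along the unbounded sequence $(t_k)$ must be identically zero, giving $v^0 = 0$. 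Hence $v \in V^-$ and $\beta_t \in V^-$ for every $t$. The case $t_k \to -\infty$ is symmetric and yields $v \in V^+$. Finally, when $v \in V^-$, the decay $\|e^{tA}|_{V^-}\| \leq c^{-1} e^{-t\lambda}$ makes $\beta_t = \int_0^t e^{sA} v\, ds$ uniformly bounded on $[0, +\infty)$.

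The main obstacle I anticipate is the regularity upgrade: the integral representation is essential, yet with only continuity and the cocycle identity I cannot a priori speak of $\beta'(0)$. A tempting shortcut is to play directly with the identity $(I - e^{sA})\beta_t = (I - e^{tA})\beta_s$, obtained by swapping $t$ and $s$ in the cocycle; but that only shows $e^{t_k A}\beta_t$ is bounded, which yields $A_\NC \beta_t^0 = 0$ rather than $\beta_t^0 = 0$. The averaging trick with $G$ breaks this circularity by exploiting the automatic regularity of the integral on the left-hand side. Once $\beta$ is smooth, the remaining steps are routine estimates based on (\ref{expanding}) and the nilpotency of $A|_{V^0}$.
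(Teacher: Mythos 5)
Your argument is correct, and it is self-contained, which the paper itself is not: the paper does not prove Lemma \ref{beta} but quotes it from [ASVAPJ, Lemma 2.1], so there is no in-paper proof to match against. Your route — upgrading continuity to smoothness via the averaging identity $\beta_s=\int_s^{s+1}\beta_u\,du-\mathrm{e}^{sA}G$, then deriving the closed form $\beta_t=\int_0^t\mathrm{e}^{sA}v\,ds$ and reading off each component of $v$ along $V^+\oplus V^0\oplus V^-$ — is a clean classification of \emph{all} continuous solutions of the cocycle, from which the lemma (including the final boundedness claim when $v\in V^-$) follows transparently; it also isolates exactly where the "no elliptic part" hypothesis enters, namely in making $A|_{V^0}$ nilpotent so that $\beta^0_t$ is a polynomial. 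The only point to tighten is the step "the lower bound $\|\mathrm{e}^{tA}|_{V^+}\|\geq c\,\mathrm{e}^{t\lambda}$ forces $v^+=0$": read literally as an operator-norm bound, (\ref{expanding}) does not give a pointwise lower bound for every vector of $V^+$. Either interpret (\ref{expanding}) as the conorm (minimal gain) estimate, or argue directly that since all eigenvalues of $A|_{V^+}$ have positive real part, $\|v^+\|\leq\|\mathrm{e}^{-t_kA}|_{V^+}\|\,\|\mathrm{e}^{t_kA}v^+\|\to 0$, so boundedness of $\mathrm{e}^{t_kA}v^+$ along $t_k\to+\infty$ forces $v^+=0$; the same remark applies to the symmetric case on $V^-$. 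With that one-line fix the proof is complete.
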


\subsection{Linear vector fields and dynamics on Lie groups}

In this section, we introduce the notion of linear vector fields and summarize their main properties. 
Let $G$ be a connected Lie group with Lie algebra $\mathfrak{g}$, which we consider as the set of right-invariant vector fields on $G$.

A vector field $\mathcal{X}$ on $G$ is said to be \emph{linear} if its flow $\{\varphi_t\}_{t\in\mathbb{R}}$ forms a one-parameter subgroup of $\mathrm{Aut}(G)$. 
Associated with any linear vector field $\mathcal{X}$ there is a derivation $\mathcal{D}$ of $\mathfrak{g}$ satisfying
\begin{equation*}
(d\varphi_{t})_{e} = \mathrm{e}^{t\mathcal{D}}, 
\qquad \text{for all } t\in\mathbb{R}.
\end{equation*}
In particular,
\begin{equation*}
\varphi_t(\exp Y) = \exp(\mathrm{e}^{t\mathcal{D}}Y),
\qquad \text{for all } t\in\mathbb{R}, \; Y\in\mathfrak{g}.
\end{equation*}

For any connected Lie group $G$, we denote by $T(G)$ the \emph{toral component} of $G$, that is, the maximal compact connected subgroup of $Z(G)_1$. 
It is well known that if $T(G)$ is trivial, then $Z(G)_1$ is simply connected and $T(G/T(G))$ is trivial 
(see \cite[Proposition~3.3]{Mau}).

Let $\{\varphi_t\}_{t\in\mathbb{R}} \subset \mathrm{Aut}(G)$ be a flow. 
By maximality, we have $\varphi_t(T(G)) = T(G)$ for every $t\in\mathbb{R}$. 
However, since the automorphism group of $T(G)$ is discrete, it follows that 
$\varphi_t|_{T(G)} = \mathrm{id}_{T(G)}$. 
In particular,
\[
T(G) \subset \mathrm{fix}(\varphi).
\]

Let $\mathcal{D} = \mathcal{D}_{\EC} + \mathcal{D}_{\HC} + \mathcal{D}_{\NC}$ 
be the Jordan decomposition of $\mathcal{D}$. 
Since $\mathcal{D}$ is a derivation, the components 
$\mathcal{D}_{\EC}, \mathcal{D}_{\HC},$ and $\mathcal{D}_{\NC}$ 
are also derivations of $\mathfrak{g}$ 
(see Theorem~3.2 and Proposition~3.3 of \cite{SM1}). 
Moreover, if $\mathfrak{g}_\lambda$ denotes the eigenspace of the hyperbolic part $\mathcal{D}_{\HC}$, 
then 
\[
[\mathfrak{g}_\lambda, \mathfrak{g}_\mu] \subset \mathfrak{g}_{\lambda+\mu}
\quad \text{if } \lambda+\mu \text{ is an eigenvalue of } \mathcal{D}_{\HC},
\]
and $[\mathfrak{g}_\lambda, \mathfrak{g}_\mu]=0$ otherwise 
(see \cite[Proposition~3.1]{SM1}). 
It follows that
\[
\mathfrak{g}^+ = \bigoplus_{\lambda>0}\mathfrak{g}_\lambda,
\qquad
\mathfrak{g}^- = \bigoplus_{\lambda<0}\mathfrak{g}_\lambda
\]
are nilpotent Lie subalgebras. 
Denoting by $\mathfrak{g}^0 = \ker \mathcal{D}_{\HC}$, we get the decomposition
\[
\mathfrak{g} = \mathfrak{g}^+ \oplus \mathfrak{g}^0 \oplus \mathfrak{g}^-.
\]

Let $\mathcal{X}$ be a linear vector field on a connected Lie group $G$. 
We say that $\mathcal{X}$ is \emph{elliptic}, \emph{hyperbolic}, or \emph{nilpotent}, respectively, 
when its associated derivation $\mathcal{D}$ is elliptic, hyperbolic, or nilpotent. 
The Jordan decomposition of $\mathcal{X}$ is given by
\[
\mathcal{X} = \mathcal{X}_{\EC} + \mathcal{X}_{\HC} + \mathcal{X}_{\NC},
\qquad
\text{where } \mathcal{X}, \mathcal{X}_{\EC}, \mathcal{X}_{\HC}, \mathcal{X}_{\NC}
\text{ commute},
\]
with $\mathcal{X}_{\EC}$ elliptic, $\mathcal{X}_{\HC}$ hyperbolic, and $\mathcal{X}_{\NC}$ nilpotent. 
Following \cite{ASVAPJ}, any linear vector field admits a unique Jordan decomposition. 
If $\{\varphi_t\}_{t\in\mathbb{R}}$ is the flow of $\mathcal{X}$, then
\[
\forall t\in\mathbb{R}, \qquad 
\varphi_t = \varphi_t^{\EC} \circ \varphi_t^{\HC} \circ \varphi_t^{\NC}
\]
is a commutative decomposition of $\varphi_t$, 
where $\{\varphi_t^i\}_{t\in\mathbb{R}}$ is the flow of $\mathcal{X}_i$ for $i = \EC, \HC, \NC$. Moreover, the existence of such decomposition implies that the set of recurrent points for the flow of $\XC$ satisfies (see \cite[Theorem 3.3]{ASVAPJ})
\begin{equation}
    \label{recurrent}
    \RC(\varphi)=\mathrm{fix}(\varphi_t^{\HC})\cap\mathrm{fix}(\varphi_t^{\NC}).
\end{equation}

We define the {\it dynamical subgroups} of $G$ associated with the hyperbolic part of $\XC$ by
\[
G^0 = \fix(\varphi^{\HC}), \quad 
G^+ = \exp(\fg^+), \quad 
G^- = \exp(\fg^-).
\]

The next proposition summarizes the main properties of these dynamical subgroups.
Its proof can be found in \cite[Proposition 2.9]{DS}.

\begin{proposition}
\label{dynamical}
The following properties hold:
\begin{enumerate}
    \item $G^-$ and $G^+$ are connected, simply connected Lie groups; 
    \item $G^0$ normalizes $G^+$ and $G^-$, and consequently $G^{+,0} := G^+G^0$ and $G^{-,0} := G^-G^0$ are subgroups of $G$;
    \item $G^- \cap G^0 = G^+ \cap G^0 = G^{-,0} \cap G^+ = G^{+,0} \cap G^- = \{e\}$;
    \item The dynamical subgroups are closed in $G$;
    \item If $G$ is solvable, then $G^0$ is connected and $G = G^- G^{+,0}$. Moreover, $\fix(\varphi) \subset G^0.$
\end{enumerate}
\end{proposition}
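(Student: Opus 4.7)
My approach is to prove the five items in the stated order, exploiting the eigenspace decomposition $\fg = \fg^+ \oplus \fg^0 \oplus \fg^-$ and the fact that $\varphi^{\HC}_t$ acts on $\fg$ as $\rme^{t\DC_{\HC}}$.

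For (1), since $\fg^\pm$ is nilpotent, the Baker--Campbell--Hausdorff formula terminates, so $\exp(\fg^\pm)$ is already closed under multiplication and coincides with the connected Lie subgroup having Lie algebra $\fg^\pm$; this yields connectedness. For simple connectedness I would lift the restriction $\varphi^{\HC}_t|_{G^+}$ to an automorphism of the universal cover $\widetilde{G^+}$ (identified with $\fg^+$ via $\exp$, on which the lifted flow is the linear map $\rme^{t\DC_{\HC}}$), observe that the deck group $\Gamma$ is discrete and invariant, and conclude that for every $\gamma \in \Gamma$ the continuous curve $t \mapsto \rme^{t\DC_{\HC}}\gamma$ lies in a discrete set and is therefore constant; since $\DC_{\HC}|_{\fg^+}$ has only positive eigenvalues, this forces $\gamma = 0$. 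The same argument works for $G^-$.

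For (2), the key observation is that $g \in G^0 = \fix(\varphi^{\HC})$ forces $\varphi^{\HC}_t$ to commute with the inner automorphism $C_g$; differentiating at the identity yields $[\Ad(g), \DC_{\HC}] = 0$, so $\Ad(g)$ preserves every eigenspace of $\DC_{\HC}$, in particular $\fg^\pm$. Therefore $g G^\pm g^{-1} = \exp(\Ad(g)\fg^\pm) = G^\pm$, and the products $G^{\pm, 0}$ inherit a subgroup structure. The trivial intersections in (3) follow by dynamics: if $g = \exp Y \in G^+ \cap G^0$ then $\exp(\rme^{t\DC_{\HC}}Y) = \exp Y$ for all $t$, so the injectivity of $\exp$ on $\fg^+$ established in (1) forces $Y \in \fg^+ \cap \ker \DC_{\HC} = \{0\}$; the case $G^- \cap G^0$ is symmetric. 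For $G^{-,0} \cap G^+$, writing $h = g^- g^0 = g^+$ and applying $\varphi^{\HC}_t$, the left-hand side converges to $g^0$ as $t \to +\infty$ (since $g^-$ is contracted to $e$), whereas the right-hand side is unbounded unless $g^+ = e$; this pins $h$ to $e$, and the case $G^{+,0} \cap G^-$ is analogous.

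For (4), $G^0$ is closed as the fixed-point set of a continuous map. To show that $G^\pm$ are closed I would identify $G^+$ with the $\varphi^{\HC}$-strong stable set $\{g \in G : \varphi^{\HC}_{-t}(g) \to e\}$ and use that in $\exp$-coordinates on $\fg^+$ the flow is a linear contraction, so that the injective immersion $\fg^+ \hookrightarrow G$ is proper; the verification that linear divergence in $\fg^+$ corresponds to divergence in the ambient group $G$---thereby ruling out accumulation of $G^+$ from outside---is where I expect the main technical obstacle to lie. Finally for (5), solvability of $G$ ensures that $\fg^0 = \ker\DC_{\HC}$ exponentiates to a connected subgroup (whence $G^0$ is connected) and that the vector-space splitting $\fg = \fg^- \oplus \fg^{+,0}$ lifts to a global product decomposition $G = G^- G^{+,0}$. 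The remaining inclusion $\fix(\varphi) \subset G^0$ is immediate from $\fix(\varphi) \subset \RC(\varphi)$ together with the identification (\ref{recurrent}), which yields $\RC(\varphi) \subset \fix(\varphi^{\HC}) = G^0$.
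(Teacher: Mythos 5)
The paper does not actually prove this proposition; it quotes it from \cite[Proposition 2.9]{DS}, so your proposal has to stand on its own, and as written it has a real gap exactly where the substance lies. You explicitly leave item (4) open for $G^+$ and $G^-$: the statement that divergence of $\rme^{t\DC_{\HC}}Y$ in $\fg^+$ forces $\exp(\rme^{t\DC_{\HC}}Y)$ to leave every compact subset of $G$ is equivalent to $\exp|_{\fg^+}$ being proper, i.e.\ to $G^+$ being closed and embedded, and that is precisely what item (4) asks you to prove. Moreover, your proposed route --- identifying $G^+$ with the set $\{g:\varphi^{\HC}_{-t}(g)\to e\}$ --- is essentially Theorem \ref{stable} of the paper, which is proved \emph{later} using this proposition (openness of $G^{+,0}_1G^-$, uniqueness of the decomposition, closedness), so within this paper's logic it cannot be invoked here. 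The gap propagates backwards into your item (3): in the argument for $G^{-,0}\cap G^+=\{e\}$ you pass from $\varphi^{\HC}_t(g^-)g^0\to g^0$ to ``the right-hand side is unbounded unless $g^+=e$'', but without closedness/properness of $G^+$ a divergent curve in $\fg^+$ could perfectly well have convergent image in $G$ (this is exactly the irrational-line-in-a-torus phenomenon you must rule out). So items (3) (mixed intersections) and (4) are circular as structured, and the hardest part of the proposition is not proved.

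Item (5) is also asserted rather than proved: connectedness of $G^0=\fix(\varphi^{\HC})$ and the global factorization $G=G^-G^{+,0}$ for solvable $G$ do not follow merely from the Lie-algebra splitting $\fg=\fg^-\oplus\fg^{+,0}$ ``lifting''; fixed-point sets of automorphism flows are in general disconnected, and the product decomposition requires an induction on the solvable structure (or the arguments of \cite{DS}). By contrast, your items (1) and (2), the intersections $G^{\pm}\cap G^0$ in (3), the closedness of $G^0$, and the inclusion $\fix(\varphi)\subset\RC(\varphi)=\fix(\varphi^{\HC})\cap\fix(\varphi^{\NC})\subset G^0$ via \eqref{recurrent} are correct and cleanly argued. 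To repair the proposal you need an independent proof that $G^{\pm}$ are closed (e.g.\ the arguments in \cite{DS}), proved before, not after, the unboundedness claims in item (3).
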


We say that $G$ is {\it decomposable} if $G = G^- G^{+,0}$. 
In particular, if $G$ is decomposable and $x \in G$, there exist unique elements $a \in G^-$, $b \in G^+$ and $c \in G^0$ such that $x = a b c.$

In what follows, we construct a left-invariant Riemannian metric on $G^-$ that will be useful in subsequent sections. 
Assume that $G = G^{-,0}$ with $G^0$ a compact subgroup. 
Let $\langle\cdot, \cdot\rangle$ be an inner product on $\fg^-$ satisfying
\[
\left\langle \rme^{t\DC_{\EC}}X, \rme^{t\DC_{\EC}}Y \right\rangle 
    = \langle X, Y \rangle, 
    \quad \forall X, Y \in \fg^-, \; t \in \R,
\]
which is guaranteed by the $\DC_{\EC}$-invariance of $\fg^-$ and the fact that $\rme^{t\DC_{\EC}}$ is an isometry for all $t \in \R$. 
Define
\[
\llangle X, Y \rrangle := 
    \int_{G^0} \langle \Ad(g)X, \Ad(g)Y \rangle \, d\mu(g),
\]
where $\mu$ denotes the Haar measure on $G^0$. 
It is straightforward to verify that both $\rme^{t\DC_{\EC}}$ and $\Ad(g)$ are isometries of the inner product $\llangle \cdot, \cdot \rrangle$ for all $t \in \R$ and $g \in G^0$. 
Moreover, by equation~(\ref{expanding}), there exist constants $\lambda > 0$ and $c \geq 1$ such that
\[
|\rme^{t\DC}X| \leq c^{-1}\rme^{-t\lambda}|X|, 
    \quad \forall t \geq 0, \; X \in \fg^-.
\]
Let $\varrho$ denote the left-invariant metric on $\fg^-$ induced by $\llangle \cdot, \cdot \rrangle$. 
By construction, the maps $C_g$ (for $g \in G^0$) and $\varphi_t^{\EC}$ are isometries of $\varrho$, and there exists $\lambda > 0$ and $c\geq 1$ such that
\[
\varrho(\varphi_t(g), \varphi_t(h))
    \leq c^{-1}\rme^{-\lambda t}\varrho(g, h),
    \quad \forall g, h \in G^-, \; t \geq 0.
\]

We conclude this section with a technical lemma showing that the only connected Lie groups whose elements have bounded positive powers are necessarily compact. 
Since we did not find proof of this result in the literature, we provide one here for completeness.

\begin{lemma}
\label{bound}
Let $G$ be a connected Lie group. 
If, for every $g \in G$, the set $\{g^n : n \in \N\}$ is bounded, then $G$ is compact.
\end{lemma}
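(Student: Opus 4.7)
The plan is to upgrade the hypothesis to the statement that every one-parameter subgroup of $G$ has relatively compact image in $G$, then push this through the adjoint representation, and finally invoke the classical structure theory of compact-type Lie algebras.

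First, I would observe that, since the hypothesis applies equally to $g$ and to $g^{-1}$, the full bilateral set $\{g^n:n\in\Z\}$ is relatively compact. Applied to $g=\exp(X)$ for arbitrary $X\in\fg$, and combined with compactness of $\exp([0,1]X)$, this gives that the one-parameter subgroup $\{\exp(tX):t\in\R\}=\bigcup_{n\in\Z}\exp(nX)\cdot\exp([0,1]X)$ has compact closure in $G$.

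Second, I would pass to the adjoint representation. For every $X\in\fg$, continuity of $\Ad$ yields that $\{\Ad(\exp(tX)):t\in\R\}=\{\rme^{t\ad X}:t\in\R\}$ is relatively compact in $GL(\fg)$. Writing the Jordan decomposition of $\ad X$ as in Section~2.1 and invoking the estimate~\eqref{expanding}, both the hyperbolic part $(\ad X)_{\HC}$ and the nilpotent part $(\ad X)_{\NC}$ must vanish, since otherwise $\|\rme^{t\ad X}\|$ would grow exponentially (on some $V^{\pm}$) or polynomially along some invariant subspace. Hence $\ad X$ is elliptic for every $X\in\fg$, i.e., $\fg$ is of compact type.

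Third, I would deduce compactness of $G$ from this infinitesimal condition. Since $\tr(\ad X)^2\leq 0$ for all $X$, the Killing form of $\fg$ is negative semi-definite, so $\fg$ splits as $\fg=\fz(\fg)\oplus\fk$ with $\fk$ a compact semisimple ideal. By Weyl's theorem, the analytic subgroup $K\leq G$ with Lie algebra $\fk$ is compact. The identity component $Z(G)_1$ is a connected abelian Lie group, hence isomorphic to $\R^a\times\T^b$, and the hypothesis forces $a=0$ (an $\R$-summand would contain elements with unbounded positive powers). Since $\fz(\fg)$ and $\fk$ commute, $\exp(X+Y)=\exp(X)\exp(Y)$ for $X\in\fz(\fg)$, $Y\in\fk$, and a neighborhood of $e$ lies in $Z(G)_1\cdot K$; by connectedness of $G$ this forces $G=Z(G)_1\cdot K$, a product of two compact subgroups, and therefore compact.

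I expect the hard part to be the last step: the passage from the infinitesimal condition ``$\ad X$ elliptic for every $X$'' to global compactness of $G$ relies on classical compact-type Lie algebra theory together with Weyl's compactness result, and one must carefully track the center to rule out a surviving $\R$-summand in $Z(G)_1$.
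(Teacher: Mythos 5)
Your route is in substance the same as the paper's: bounded powers are upgraded to bounded one-parameter subgroups, these are pushed through $\Ad$ to show that every $\ad(X)$ is elliptic, and compactness is then extracted from the Killing form together with a separate compactness argument for the center. Your first two steps are correct as written, and your third step differs from the paper only in packaging: you split $\fg=\fz(\fg)\oplus\fk$ and apply Weyl's theorem to the analytic subgroup $K$, then use $Z(G)_1\simeq\R^a\times\T^b$ and the product $G=Z(G)_1K$, whereas the paper passes to the quotient $G/Z(G)_1$ (whose Killing form it shows to be negative definite) and proves compactness of $Z(G)_1=\exp(\fz(\fg))$ by covering it with a product of compact sets; both variants work, and your use of closedness of $Z(G)_1$ in $G$ to rule out $\R$-factors is legitimate.

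The one genuine weak point is the justification you give for the splitting: negative semi-definiteness of the Killing form alone does \emph{not} imply $\fg=\fz(\fg)\oplus\fk$ with $\fk$ compact semisimple. The Heisenberg algebra has identically vanishing, hence negative semi-definite, Killing form, yet it is not reductive, so the implication as you state it fails. What saves your argument is precisely the ellipticity established in your second step: since each $\ad(X)$ is semisimple with purely imaginary eigenvalues, $\mathcal{K}(X,X)=-\sum_i\mu_i^2=0$ forces $\ad(X)=0$, so the null space of the Killing form is exactly $\fz(\fg)$; equivalently, any $X$ in the radical has $\ad(X)$ nilpotent (modulo the center) and elliptic, hence zero, so the radical of $\fg$ equals $\fz(\fg)$, $\fg$ is reductive, and its semisimple ideal has negative-definite Killing form. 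This is exactly the point the paper makes when it concludes that the Killing form of $\fg/\fz(\fg)$ is negative definite. Insert that short argument (or an explicit reference to the characterization of compact-type algebras that contains it), and your proof is complete; the remaining steps (Weyl's theorem for $K$, compactness of $Z(G)_1$, and $G=Z(G)_1K$ via the open-subgroup argument) are all sound.
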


\begin{proof}
By hypothesis and standard properties of the exponential map, each one-parameter subgroup 
$\{\exp(tX) : t \in \R\}$ is bounded for any $X \in \fg$. 
By continuity,
\[
\{\Ad(\exp tX) : t \in \R\} 
    = \{\rme^{t\ad(X)} : t \in \R\}
\]
is bounded in $\Ad(G)$, and hence in $\mathfrak{gl}(\fg)$. 
Examining the Jordan form of $\rme^{t\ad(X)}$ shows that $\ad(X)$ is skew-symmetric. 
In particular, the Cartan–Killing form
\[
\mathcal{K} : \fg \times \fg \to \R,
    \quad (X, Y) \mapsto \mathcal{K}(X, Y) = \tr(\ad(X)\ad(Y)),
\]
is negative semi-definite. 
Indeed, since $\ad(X)$ is skew-symmetric, there exists a basis in which it is block-diagonal with $2 \times 2$ blocks of the form,
\[
A_i = 
\begin{pmatrix}
0 & -\mu_i \\[2pt]
\mu_i & 0
\end{pmatrix}
\quad \implies \quad 
\mathcal{K}(X, X) = \tr(\ad(X)^2) = -\sum_i \mu_i^2 \leq 0.
\]
The above expression also shows that,
\[
\mathcal{K}(X, X) = 0 
\iff \ad(X) \equiv 0 
\iff X \in \fz(\fg).
\]
Therefore, the Cartan–Killing form of the quotient Lie algebra $\fg / \fz(\fg)$ is negative-definite, implying that $G / Z(G)_1$ is a compact semisimple Lie group.

On the other hand, fix a basis $\{X_1, \ldots, X_n\} \subset \fz(\fg)$. 
Since one-parameter subgroups are bounded, there exist compact subsets 
$C_1, \ldots, C_n \subset Z(G)_1$ such that 
$\{\exp(tX_i) : t \in \R\} \subset C_i$. 
Hence, for any $X \in \fz(\fg)$, there exist $\alpha_1, \ldots, \alpha_n \in \R$ such that
\[
\exp X 
    = \exp\!\left(\sum_{i=1}^n \alpha_i X_i\right)
    = \exp(\alpha_1 X_1) \cdots \exp(\alpha_n X_n)
    \subset C_1 C_2 \cdots C_n,
\]
which implies that $Z(G)_1 \subset C_1 C_2 \cdots C_n$ is a compact subgroup. 
Therefore, $G$ is a compact Lie group.
\end{proof}

\section{Dynamical characterization of stable, unstable and central subgroups}

In this section, we show that bounded orbits in $G^0$ necessarily consist of recurrent points of the flow $\varphi$. 
As a direct consequence, we obtain that the dynamical subgroups $G^+$ and $G^-$ correspond, respectively, 
to the unstable and stable manifolds of the identity element of $G$ with respect to the flow $\varphi$.

\subsection{Bounded orbits in $G^0$}
In this section, we prove that every element of the central subgroup $G^0$ whose positive orbit is bounded belongs to the recurrent set. We start with the following lemma.

\begin{lemma}
\label{boundedG^0}
    Let $\varphi$ be a flow of automorphisms on $G$. Then, for any $g\in G^0$, it holds that 
    $$
    \{\varphi_t(g),\, t\geq 0\}\text{ is bounded }
    \iff 
    g\in\fix(\varphi^{\NC}).
    $$
\end{lemma}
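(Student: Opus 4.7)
My plan is to exploit the commuting Jordan decomposition $\varphi_t = \varphi_t^{\EC}\circ\varphi_t^{\HC}\circ\varphi_t^{\NC}$. Since $g \in G^0 = \fix(\varphi^{\HC})$, the hyperbolic factor acts as the identity on $g$ (and on the whole orbit, because $G^0$ is $\varphi$-invariant), so the orbit collapses to
\[
\varphi_t(g) = \varphi_t^{\EC}\bigl(\varphi_t^{\NC}(g)\bigr), \qquad \forall\,t\in\R.
\]
The preliminary observation I would use repeatedly is that $\{\varphi_t^{\EC}\}_{t\in\R}$ has compact closure $K$ in $\mathrm{Aut}(G)$: its differential $\rme^{t\DC_{\EC}}$ is an isometry of some inner product on $\fg$, hence lies in a compact torus of $\mathrm{GL}(\fg)$, and this compactness lifts via $\mathrm{Aut}(G)\hookrightarrow \mathrm{GL}(\fg)$ to a continuous action of $K$ on $G$. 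In particular, $K$-translates of compact subsets of $G$ are compact.

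The direction $(\Leftarrow)$ then follows immediately: if $g\in\fix(\varphi^{\NC})$, the displayed identity gives $\varphi_t(g)=\varphi_t^{\EC}(g)\in K\cdot\{g\}$, which is compact. For $(\Rightarrow)$, let $C:=\overline{\{\varphi_t(g):t\ge 0\}}$, which is compact by hypothesis. Applying $(\varphi_t^{\EC})^{-1}\in K$ in the same identity yields $\varphi_t^{\NC}(g)\in K\cdot C$, again compact, so the nilpotent forward orbit $\{\varphi_t^{\NC}(g):t\ge 0\}$ is also bounded. The problem therefore reduces to the purely nilpotent claim: if the forward $\varphi^{\NC}$-orbit of $g$ is bounded, then $g\in\fix(\varphi^{\NC})$.

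This nilpotent reduction is where the real work lies and is, I expect, the main obstacle. The underlying idea is that nilpotency of $\DC_{\NC}$ makes $\rme^{t\DC_{\NC}}$ polynomial in $t$, so via $\varphi_t^{\NC}(\exp Y)=\exp(\rme^{t\DC_{\NC}}Y)$ the nilpotent orbit, in exponential coordinates, is a polynomial curve in $\fg$; a bounded polynomial $[0,\infty)\to\fg$ must be constant, forcing $\DC_{\NC}Y=0$ and hence $\exp Y\in\fix(\varphi^{\NC})$. For a general $g\in G^0$ not a priori in the image of $\exp$, the care required is to localize this argument: pick a sequence $t_k\to+\infty$ along which $\varphi_{t_k}^{\NC}(g)\to h$ (possible by boundedness), write the cocycle $\varphi_{t_k+s}^{\NC}(g)=\varphi_s^{\NC}(\varphi_{t_k}^{\NC}(g))$ in an exponential chart around $h$, and use the polynomial character of $\varphi^{\NC}$ in that chart to force $\varphi_t^{\NC}(g)=g$. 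Equivalently, applying equation~(\ref{recurrent}) to $\varphi^{\NC}$ itself (whose own Jordan decomposition is trivial except for the nilpotent part) yields $\RC(\varphi^{\NC})=\fix(\varphi^{\NC})$, so it suffices to verify that a bounded positive $\varphi^{\NC}$-orbit is recurrent, which is again the same polynomial-in-$t$ argument.
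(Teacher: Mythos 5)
Your reduction to the nilpotent part is fine and matches the paper's first step: since $g\in G^0=\fix(\varphi^{\HC})$ and the elliptic flow is precompact (it acts by isometries of a suitable metric / has compact closure in $\mathrm{Aut}(G)$), boundedness of $\{\varphi_t(g)\}_{t\geq 0}$ is equivalent to boundedness of $\{\varphi^{\NC}_t(g)\}_{t\geq 0}$, and the direction $(\Leftarrow)$ is immediate. The genuine gap is precisely in the step you flag as the main obstacle: ``bounded forward $\varphi^{\NC}$-orbit implies $g\in\fix(\varphi^{\NC})$''. Your polynomial argument lives in $\fg$, but on a general connected Lie group boundedness of $\varphi^{\NC}_t(g)$ in $G$ produces no bounded polynomial curve in $\fg$: $g$ need not lie in the image of $\exp$, and, worse, $\exp$ is not proper (think of central tori), so $\exp(\rme^{t\DC_{\NC}}Y)$ bounded does not force $\rme^{t\DC_{\NC}}Y$ bounded. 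The proposed fix --- take a limit point $h$ of the orbit and ``use the polynomial character of $\varphi^{\NC}$ in an exponential chart around $h$'' --- does not work as stated: in a chart around $h$ the time-$s$ map is not polynomial in the chart variable, its $s$-dependence at $h$ involves the (uncontrolled) orbit of $h$ itself, and nothing in this localization forces $\varphi^{\NC}_t(g)=g$. The same objection applies to the variant ``bounded implies recurrent by the same polynomial-in-$t$ argument''; invoking \eqref{recurrent} for $\varphi^{\NC}$ is a correct way to pass from recurrence to fixedness, but the recurrence itself is not obtained by any polynomial reasoning.

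What the paper actually does, and what your sketch is missing, is to linearize through the adjoint representation: $\Ad(\varphi^{\NC}_t(g))=\rme^{t\ad(\DC_{\NC})}\Ad(g)$ is a genuine polynomial curve in $\mathfrak{gl}(\fg)$, hence constant when bounded --- this is where your polynomial idea is correct --- and one concludes only that $\gamma(t):=g^{-1}\varphi^{\NC}_t(g)$ stays in $Z(G)_1$. The whole difficulty then sits in the center, which your argument never confronts. The paper splits into cases: $Z(G)$ discrete (then $\gamma\equiv e$); $Z(G)$ simply connected (write $\gamma=\exp\circ\beta$ with $\beta$ in $\fz(\fg)$, use the cocycle identity $\beta(t+s)=\beta(t)+\rme^{t\DC_{\NC}}\beta(s)$ and Lemma~\ref{beta} to kill $\beta$); and $Z(G)$ with nontrivial toral component $T(G)$, where one only gets that $\gamma$ is a one-parameter subgroup of the compact torus, a priori a bounded nonconstant winding $\gamma(t)=\rme^{tX}$ that no polynomial argument can exclude; there recurrence of the torus flow gives $g\in\RC(\varphi^{\NC})$, and the nontrivial identity \eqref{recurrent} (Theorem~3.3 of the cited reference) upgrades recurrence to fixedness. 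So the skeleton of your proposal is right, but its core step is unsupported: without passing to $\Ad(G)$ and then handling the central (in particular toral) directions, the claim does not follow.
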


\begin{proof}
Since $G^0=\fix(\varphi^{\HC})$, we have for any $g\in G^0$
$$
\{\varphi_t(g),\, t\geq 0\}\text{ bounded }
\iff
\{\varphi^{\NC}_t(g),\, t\geq 0\}\text{ bounded.}
$$
Hence, it suffices to prove that
$$
\{\varphi^{\NC}_t(g),\, t\geq 0\}\text{ bounded }
\implies 
g\in\fix(\varphi^{\NC}).
$$

Let $\varphi^{\Ad}$ denote the flow of automorphisms on $\Ad(G)$ induced by $\varphi^{\NC}$. 
If $x=\Ad(g)$, then by continuity
$$
\{\varphi^{\NC}_t(g),\, t\geq 0\}\text{ bounded in }G
\implies 
\{\varphi^{\Ad}_t(x),\, t\geq 0\}\text{ bounded in }\Ad(G).
$$
By \cite[Proposition 3.1]{ASVAPJ}, $\varphi^{\Ad}_t=\rme^{t\ad(\DC_{\NC})}\big|_{\Ad(G)}$. 
Since the topology of $\Ad(G)$ is finer than that of $\mathfrak{gl}(\fg)$, we get
$$
\{\varphi^{\Ad}_t(x),\, t\geq 0\}\text{ bounded in }\Ad(G)
\implies
\{\rme^{t\ad(\DC_{\NC})}x,\, t\geq 0\}\text{ bounded in }\mathfrak{gl}(\fg).
$$
As $\ad(\DC_{\NC})$ is nilpotent, we obtain
$$
\{\rme^{t\ad(\DC_{\NC})}x,\, t\geq 0\}\text{ bounded }
\implies 
\rme^{t\ad(\DC_{\NC})}x=x,
$$
and therefore the curve
$$
\gamma(t):=g^{-1}\varphi^{\NC}_t(g)
$$
lies in the connected component $Z(G)_1$ of the center of $G$.

If $Z(G)$ is discrete, $\gamma(t)\equiv e$ and hence $g\in\fix(\varphi)$.
If $Z(G)$ is simply connected and $\dim Z(G)>0$, there exists a curve $\beta:\R\to\fz(\fg)$ such that $\gamma(t)=\exp(\beta(t))$.
Moreover,
$$
\gamma(t+s)=\gamma(t)\varphi^{\NC}_t(\gamma(s)), \quad \forall t,s\in\R,
$$
which implies
$$
\beta(t+s)=\beta(t)+\rme^{t\DC_{\NC}}\beta(s), \quad \forall t,s\in\R.
$$
Since $\gamma(t)$ is bounded and the exponential map restricted to $Z(G)_1$ is a diffeomorphism, $\beta(t)$ is also bounded. 
By Lemma~\ref{beta}, $\beta(t)\equiv0$, and hence $\gamma(t)\equiv e$, so $g\in\fix(\varphi)$.

If $Z(G)$ is not simply connected, the toral component $T(G)$ of $G$ is nontrivial. 
As $G/T(G)$ has a simply connected center, applying the previous argument to the curve induced by $\gamma$ in $G/T(G)$ yields $\gamma(t)\in T(G)$ for all $t\in\R$. 
Since $\varphi^{\NC}|_{T(G)}=\id_{T(G)}$, we have $\gamma(t+s)=\gamma(t)\gamma(s)$, showing that $\gamma$ is a one-parameter subgroup of $T(G)$. 
Hence, there exists $X\in\fz(\fg)$ such that $\gamma(t)=\rme^{tX}$. 
Because $T(G)$ is a compact torus, there exists a sequence $t_k\to+\infty$ such that
$$
\rme^{t_kX}\to e,
\quad\text{which implies}\quad
\gamma(t_k)\to e
\quad\text{and}\quad
\varphi^{\NC}_{t_k}(g)\to g.
$$
Thus $g\in\RC(\varphi^{\NC})$, and by~\eqref{recurrent} we conclude that $g\in\fix(\varphi^{\NC})$.
\end{proof}

\begin{remark}
In the previous result, the subgroup $G^0$ is not required to be connected.
\end{remark}

The previous lemma implies the following:

\begin{theorem}
\label{elliptical}
    Let $G$ be a connected Lie group and $\varphi$ a flow of automorphisms. Then, 
    $$\{\varphi_t(g), t\geq 0\}\hspace{.1cm}\mbox{ is bounded for all }g\in G^0\hspace{.5cm}\iff\hspace{.5cm}\varphi|_{G^0}\hspace{.1cm}\mbox{ is elliptic.}$$
\end{theorem}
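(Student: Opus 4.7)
My plan is to deduce the theorem as an immediate consequence of Lemma~\ref{boundedG^0}, read pointwise across $G^0$; the theorem is essentially a structural repackaging of that lemma.

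For the forward implication, I would apply Lemma~\ref{boundedG^0} to each $g\in G^0$: the assumption of bounded positive orbit yields $g\in\fix(\varphi^{\NC})$, hence $G^0\subset\fix(\varphi^{\NC})$. Since by construction $G^0=\fix(\varphi^{\HC})$, the commuting Jordan decomposition $\varphi_t=\varphi_t^{\EC}\circ\varphi_t^{\HC}\circ\varphi_t^{\NC}$ collapses on $G^0$ to $\varphi_t|_{G^0}=\varphi_t^{\EC}|_{G^0}$, so $\varphi|_{G^0}$ is elliptic. Conversely, if $\varphi|_{G^0}$ is elliptic, then by uniqueness of the Jordan decomposition of the restricted flow its nilpotent component is trivial: $\varphi_t^{\NC}|_{G^0}=\id_{G^0}$ for every $t$, i.e.\ $G^0\subset\fix(\varphi^{\NC})$. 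Lemma~\ref{boundedG^0} then immediately supplies the bounded positive orbit for every $g\in G^0$.

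There is essentially no obstacle remaining beyond the bookkeeping already built into Lemma~\ref{boundedG^0}; the theorem simply records the pointwise equivalence of that lemma as a structural property of $\varphi|_{G^0}$, relying only on the fact that $G^0=\fix(\varphi^{\HC})$ forces the hyperbolic part to act trivially on $G^0$. The one point worth spelling out carefully in the write-up is the passage between $\mathcal{D}_{\NC}|_{\fg^0}=0$ on the Lie algebra and $\varphi_t^{\NC}|_{G^0}=\id$ on the (possibly disconnected) subgroup, but this translation is handled in the same spirit as in Lemma~\ref{boundedG^0} via recurrence and~\eqref{recurrent}.
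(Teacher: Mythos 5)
Your proposal is correct and follows essentially the same route as the paper: both directions are read off from the pointwise equivalence in Lemma~\ref{boundedG^0}, combined with $G^0=\fix(\varphi^{\HC})$ so that $G^0\subset\fix(\varphi^{\NC})$ is exactly the statement $\varphi|_{G^0}=\varphi^{\EC}|_{G^0}$. The subtlety you flag about passing from $\DC_{\NC}|_{\fg^0}=0$ to $\varphi^{\NC}_t|_{G^0}=\id$ on a possibly disconnected $G^0$ does not even arise in the paper's argument, since ellipticity of $\varphi|_{G^0}$ is taken there to mean $\varphi_t|_{G^0}=\varphi_t^{\EC}|_{G^0}$, from which $\varphi_t^{\NC}(g)=g$ for $g\in G^0$ follows directly from the commuting decomposition and $G^0=\fix(\varphi^{\HC})$.
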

\begin{proof}
    In fact, by the previous lemma,
$$\{\varphi_t(g), t\geq 0\}\hspace{.1cm}\mbox{ is bounded for all }g\in G^0\hspace{.5cm}\iff\hspace{.5cm}G^0\subset \fix(\varphi^{\mathcal{N}}).$$
In this case, $\varphi^{\mathcal{H}}|_{G^0}=\varphi^{\mathcal{N}}|_{G^0}=\id_{G^0}$, showing that $\varphi|_{G^0}=\varphi^{\mathcal{E}}|_{G^0}$, concluding the proof.
\end{proof}

\subsection{Stable and unstable manifolds}

In this section, we consider that the subgroups $G^+$ and $G^-$ are, in fact, the unstable and stable manifolds of the flow $\varphi$ at the identity element.

\begin{theorem}
\label{stable}
    Let $G$ be a connected Lie group and $\varphi$ a flow of automorphisms on $G$. Then, $G^+$ and $G^-$ are, respectively, the unstable and stable manifolds of $\varphi$ at the identity element, that is,   
$$G^-=\{g\in G, \varphi_t(g)\rightarrow e, t\rightarrow+\infty\}\hspace{.5cm}\mbox{ and }\hspace{.5cm}G^+=\{g\in G, \varphi_t(g)\rightarrow e, t\rightarrow-\infty\}.$$
\end{theorem}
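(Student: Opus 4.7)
The plan is to exploit the symmetry of the statement: replacing $\varphi$ by $\varphi^{-1}$ interchanges $G^+$ with $G^-$ as well as the limits $t\to\pm\infty$, so it suffices to establish $G^-=\{g\in G\,:\,\varphi_t(g)\to e\text{ as }t\to+\infty\}$. The inclusion $\supseteq$ is the easy direction: every $g\in G^-$ can be written as $\exp Y$ with $Y\in\fg^-$, hence $\varphi_t(g)=\exp(\rme^{t\DC}Y)$, and applying (\ref{expanding}) with $A=\DC$ and $V=\fg$ gives $\|\rme^{t\DC}Y\|\le c^{-1}\rme^{-t\lambda}\|Y\|\to 0$, so continuity of $\exp$ concludes.

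For the reverse inclusion, I would use a local product decomposition at the identity. The multiplication map $\Phi:G^-\times G^0\times G^+\to G$, $(a,b,c)\mapsto abc$, has differential at $(e,e,e)$ equal to the isomorphism $\fg^-\oplus\fg^0\oplus\fg^+\to\fg$ furnished by the direct sum decomposition of $\fg$, so the inverse function theorem provides neighborhoods $U$ of $(e,e,e)$ and $V$ of $e$ such that $\Phi|_U:U\to V$ is a diffeomorphism. Given $g$ with $\varphi_t(g)\to e$, I pick $t_0$ large enough that $\varphi_t(g)\in V$ for every $t\ge t_0$, set $h:=\varphi_{t_0}(g)$, and write $h=a_0 b_0 c_0$ uniquely with $(a_0,b_0,c_0)\in U$.

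Next, a bootstrap shows that $(\varphi_s(a_0),\varphi_s(b_0),\varphi_s(c_0))$ remains in $U$ for every $s\ge 0$ and coincides there with the continuous curve $(a(s),b(s),c(s)):=\Phi|_U^{-1}(\varphi_s(h))$ (well-defined on all of $[0,\infty)$ by the choice of $t_0$). The set of such $s$ contains $0$, is open by continuity of the flow, and is closed since at a limit point the two triples must still agree by uniqueness of the decomposition in $U$. In particular $\varphi_s(a_0),\varphi_s(b_0),\varphi_s(c_0)\to e$ as $s\to\infty$. Lemma~\ref{boundedG^0} applied to $b_0\in G^0$ then yields $b_0\in\fix(\varphi^{\NC})$, so by (\ref{recurrent}) $b_0\in\RC(\varphi)$; recurrence combined with $\varphi_s(b_0)\to e$ forces $b_0=e$. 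For $c_0\in G^+$, Proposition~\ref{dynamical}(1) and nilpotency of $\fg^+$ make $\exp:\fg^+\to G^+$ a diffeomorphism, so $c_0=\exp W$ for a unique $W\in\fg^+$; the condition $\varphi_s(c_0)=\exp(\rme^{s\DC}W)\to e$ forces $\rme^{s\DC}W\to 0$, contradicting (\ref{expanding}) on $\fg^+$ unless $W=0$, so $c_0=e$. Hence $h=a_0\in G^-$, and the $\varphi$-invariance of $G^-=\exp(\fg^-)$ gives $g=\varphi_{-t_0}(h)\in G^-$.

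The main technical obstacle I anticipate is the bootstrap ensuring that the flowed factors stay inside $U$ globally in $s$ so that they actually realize the unique local decomposition of $\varphi_s(h)$; a priori one has this only for small $s$, and the delicate point is to compare this with the a priori continuous curve $\Phi|_U^{-1}(\varphi_s(h))$ defined on the whole half-line $[0,\infty)$.
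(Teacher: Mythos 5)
Your proof is correct, and it differs from the paper's in two technical choices rather than in overall strategy (both proofs factor the point near the identity into $G^-$, $G^0$, $G^+$ components and kill the last two). Where the paper works globally, you work locally: the paper observes that $G^+G^{0}_1G^-$ is an open $\varphi$-invariant set containing $e$, so any $g$ with $\varphi_t(g)\to e$ already admits a factorization $g=g_1g_2g_3$, and this factorization is \emph{globally} unique by Proposition~\ref{dynamical}(2)--(3); applying $\varphi_t$ and invoking that uniqueness identifies the factors of $\varphi_t(g)$ with $\varphi_t(g_i)$, so each factor tends to $e$ once $\varphi_t(g)$ enters a product chart. This is precisely the point you flag as the ``main technical obstacle'': your inverse-function-theorem chart together with the open/closed bootstrap along $[0,\infty)$ is a valid substitute (the coincidence of the flowed triple with $\Phi|_U^{-1}(\varphi_s(h))$ is automatic on the set where the triple lies in $U$, and closedness follows from continuity of $\Phi|_U^{-1}$ on $V$), but the whole bootstrap can be bypassed by the global uniqueness of the $G^-G^0G^+$ decomposition, which is the paper's route. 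The second difference is how the central factor is eliminated: you combine Lemma~\ref{boundedG^0} with \eqref{recurrent} and recurrence ($\varphi_{t_k}(b_0)\to b_0$ together with $\varphi_s(b_0)\to e$ forces $b_0=e$), whereas the paper, after Lemma~\ref{boundedG^0}, uses a metric for which $\varphi^{\EC}$ acts by isometries to get $\varrho(g_2,e)=\varrho(\varphi_t(g_2),e)\to 0$; your recurrence argument buys you a proof that avoids constructing that metric. For the $G^+$ factor both arguments appeal to expansion; note only that \eqref{expanding} as displayed bounds the restricted operator norm, so what you actually use is the pointwise estimate $\|\rme^{s\DC}W\|\geq c\,\rme^{\lambda s}\|W\|$ for $W\in\fg^+$ (equivalently \eqref{expanding} applied to $-\DC$), plus closedness of $G^+$ (Proposition~\ref{dynamical}(4)) to pass from $\varphi_s(c_0)\to e$ in $G$ to $\rme^{s\DC}W\to 0$ in $\fg^+$ --- the same implicit step as the paper's remark that $\varphi|_{G^+}$ is expanding in positive time.
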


\begin{proof}
Let us consider the first case, since the second one is analogous. Now, the fact that $G^-$ is a nilpotent simply connected Lie group (Proposition \ref{dynamical}), implies that the exponential map $\exp:\fg^-\rightarrow G^-$ is a diffeomorphism. In particular, 
$$X\in\fg^- \hspace{.5cm}\implies \hspace{.5cm}\rme^{t\DC}X\rightarrow 0,\hspace{.5cm} t\rightarrow+\infty\hspace{.5cm}\implies \hspace{.5cm}\varphi_t(\exp X)=\exp(\rme^{t\DC}X)\rightarrow e, \hspace{.5cm}t\rightarrow+\infty.$$
Therefore, 
$$G^-\subset\{g\in G, \varphi_t(g)\rightarrow e, t\rightarrow+\infty\}.$$
On the other hand, let $g\in G$ and assume that $\varphi_t(g)\rightarrow e$ as $t\rightarrow+\infty$. Since $G^{+, 0}_1G^-$ is an open $\varphi$-invariant neighborhood of the origin, we must necessarily have that $g\in G^{+, 0}_1G^-$. Hence, $g$ decomposes uniquely as $g=g_1g_2g_3$ with $g_1\in G^+$, $g_2\in G^0_1$ and $g_3\in G^-$. By the uniqueness of the decomposition and the $\varphi$-invariance of the subgroups $G^+, G^0_1$ and $G^-$, we get that 
$$\varphi_t(g_i)\rightarrow e, \hspace{.5cm}t\rightarrow+\infty, \hspace{.5cm}i=1, 2, 3.$$
However, the fact that $\varphi|_{G^+}$ is expanding in positive-time, gives us that
$$\varphi_t(g_1)\rightarrow e, \hspace{.5cm}t\rightarrow+\infty, \hspace{.5cm}\iff \hspace{.5cm}g_1=e.$$
On the other hand, 
$$\varphi_t(g_2)\rightarrow e, \hspace{.5cm}t\rightarrow+\infty, \hspace{.5cm}\implies \hspace{.5cm}\{\varphi_t(g_2), t\geq 0\}\hspace{.1cm}\mbox{ is bounded }\hspace{.5cm}\implies\hspace{.5cm}g_2\in\fix(\varphi^{\mathcal{N}}).$$
By considering a metric $\varrho$ such that $\varphi^{\mathcal{E}}$ is an isometry, give us that 
$$\varrho(g_2, e)=\varrho(\varphi_t^{\mathcal{E}}(g_2), e)=\varrho(\varphi_t^{\mathcal{E}}(\varphi_t^{\mathcal{N}}(\varphi_t^{\mathcal{H}}(g_2))), e)=\varrho(\varphi_t(g_2), e)\rightarrow 0,$$
implying that $g_2=e$ and hence $g=g_3\in G^-$,  concluding the proof.
\end{proof}

\section{Linear control systems}

A \emph{linear control system} (LCS for short) on $G$ is determined by the family of ordinary differential equations
\begin{flalign*}
	  && \dot{g}(t) = \mathcal{X}(g(t)) + \sum_{i=1}^{m} u_i(s) Y^{i}(g(t)), &&\hspace{-1cm}\left(\Sigma_{G}\right)
	  \end{flalign*}

where $\mathcal{X}$ is a linear vector field, the $Y^{i}$ are right-invariant vector fields, and ${\bf u} = (u_1,\ldots, u_m) \in \mathcal{U}$, with 
$$\UC=\{{\bf u}:\R\rightarrow\R^m; {\bf u} \mbox{ is piecewise constant and } {\bf u}(t)\in\Omega\},$$
for $\Omega\subset\R^m$ a compact, convex subset with $0\in\inner\Omega$, called the \emph{range} of $\Sigma_G$. For each $x \in M$ and ${\bf u} \in \UC$, the system $\Sigma_G$ admits a unique solution 
$t \mapsto \phi(t, x, {\bf u})$ in the sense of Carath\'eodory, satisfying $\phi(0, x, {\bf u}) = x$. The solutions are defined for all $t \in \R$ and satisfy
\[
    \phi(t, gh, {\bf u}) = \phi(t, g, {\bf u})\varphi_t(h), 
    \qquad \forall\, g, h \in G,\, t \in \R,\, {\bf u} \in \UC.
\]

The set of points \emph{reachable from $x$} and the set of points \emph{controllable to $x$} in time $t > 0$ are defined, respectively, as
\[
\mathcal{O}^{+}_{t}(x) := \big\{ y \in M \;\big|\; \text{there exists } {\bf u} \in \mathcal{U} \text{ such that } y = \phi(t, x, {\bf u}) \big\},
\]
\[
\mathcal{O}^{-}_{t}(x) := \big\{ y \in M \;\big|\; \text{there exists } {\bf u} \in \mathcal{U} \text{ such that } x = \phi(t, y, {\bf u}) \big\}.
\]

The \emph{positive} and \emph{negative orbits} of $x$ are, respectively, 
\[
\mathcal{O}^{+}(x) := \bigcup_{t > 0} \mathcal{O}^{+}_{t}(x), 
\qquad
\mathcal{O}^{-}(x) := \bigcup_{t > 0} \mathcal{O}^{-}_{t}(x).
\]
We say that the linear control system $\Sigma_G$ satisfies the \emph{Lie algebra rank condition (abrev. LARC)} if $\fg$ is the smallest $\DC$-invariant subalgebra containing the vectors $\{Y^1, \ldots, Y^m\}$. In particular, if the LARC is satisfied, the sets
\[
\mathcal{O}^{+}_{\leq t}(x) := \bigcup_{0 < \tau \leq t} \mathcal{O}^{+}_{\tau}(x), 
\qquad
\mathcal{O}^{-}_{\leq t}(x) := \bigcup_{0 < \tau \leq t} \mathcal{O}^{-}_{\tau}(x), 
\quad t > 0,
\]
have nonempty interior. The next result states the main properties of the reachable sets (see \cite[Proposition~2]{Jouan}).

\begin{proposition}
\label{properties}
For an LCS, the following statements hold:
\begin{enumerate}
    \item $\OC^{+}_{t_1+t_2}(e) = \OC^+_{t_1}(e)\varphi_{t_1}\big(\OC^+_{t_2}(e)\big)$, for all $t_1, t_2 > 0$;
    \item $\OC^{+}_{t_1}(e) \subset \OC^{+}_{t_2}(e)$, for all $0 < t_1 < t_2$;
    \item $\OC^{+}_{\leq t}(e)=\OC^{+}_{t}(e)$, for all $t>0$; 
    \item $\OC^{+}_{t}(g) = \OC^{+}_{t}(e)\varphi_T(g)$, for all $T > 0$ and $g\in G$.
\end{enumerate}
\end{proposition}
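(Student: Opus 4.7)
The argument rests on two ingredients already in the setup: the cocycle identity $\phi(t, gh, {\bf u}) = \phi(t, g, {\bf u})\varphi_t(h)$, and the fact that $0 \in \inner\Omega$ makes the constant zero control belong to $\UC$, so that $\phi(t, e, {\bf 0}) = \varphi_t(e) = e$ for all $t$ (since each $\varphi_t$ is an automorphism).

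Item (4) will be immediate: writing $g = e \cdot g$ in the cocycle formula gives $\phi(t, g, {\bf u}) = \phi(t, e, {\bf u}) \varphi_t(g)$ for every ${\bf u} \in \UC$, so $\OC^+_t(g) = \OC^+_t(e) \varphi_t(g)$ (the $T$ in the stated formula should be read as $t$).

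The real content is item (1), which follows from concatenation. Given ${\bf v}_i \in \UC$ with $g_i = \phi(t_i, e, {\bf v}_i)$, I form ${\bf w} \in \UC$ by running ${\bf v}_2$ on $[0, t_2]$ followed by the time-shifted ${\bf v}_1$ on $[t_2, t_1+t_2]$. The semigroup property of Carath\'eodory solutions together with the cocycle identity yields
\[
\phi(t_1+t_2, e, {\bf w}) = \phi(t_1, g_2, {\bf v}_1) = \phi(t_1, e, {\bf v}_1)\varphi_{t_1}(g_2) = g_1\varphi_{t_1}(g_2),
\]
which gives the inclusion $\supset$. For $\subset$ I will take any ${\bf w}$ producing a point in $\OC^+_{t_1+t_2}(e)$, split it at time $t_2$ into two pieces (both piecewise constant with values in $\Omega$, hence in $\UC$), and reverse the same computation to write the endpoint in the form $g_1\varphi_{t_1}(g_2)$ with $g_i \in \OC^+_{t_i}(e)$.

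Item (2) will come from prepending the zero control: given $g = \phi(t_1, e, {\bf v}) \in \OC^+_{t_1}(e)$, the concatenation that is zero on $[0, t_2-t_1]$ and equals ${\bf v}(\,\cdot\, - (t_2-t_1))$ on $[t_2-t_1, t_2]$ belongs to $\UC$ and reaches $g$ at time $t_2$, because the initial zero segment keeps the trajectory fixed at $e$. Item (3) then follows immediately, since (2) yields $\OC^+_\tau(e) \subset \OC^+_t(e)$ for every $0 < \tau \leq t$, so the union defining $\OC^+_{\leq t}(e)$ collapses to $\OC^+_t(e)$. No step poses a real obstacle; the only point that needs care is the order of concatenation in (1), chosen so that the twist by $\varphi_{t_1}$ lands on the $\OC^+_{t_2}(e)$ factor as written in the statement.
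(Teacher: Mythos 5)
Your proof is correct and is essentially the standard argument: the paper itself does not prove this proposition but cites \cite[Proposition~2]{Jouan}, whose proof rests on exactly the same two ingredients you use, namely the concatenation (cocycle) property of the solutions together with the identity $\phi(t,gh,{\bf u})=\phi(t,g,{\bf u})\varphi_t(h)$ and the admissibility of the zero control. Your reading of the $T$ in item (4) as a typo for $t$ is also the intended one.
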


\begin{definition}
\label{def:conjugated}
Let $G$ and $H$ be connected Lie groups. We say that two linear systems $\Sigma_G$ and $\Sigma_H$, respectively on $G$ and $H$, are \emph{conjugated} if there exists a surjective homomorphism $
\psi:G\rightarrow H$ such that
\begin{equation}
\label{solu}
\psi\left(\phi^G(t, g, u)\right)
    =\phi^H(t, \psi(g), u),
    \hspace{1cm}
    \forall\, g\in G,\; t\in\R,\; u\in\UC.
\end{equation}
The map $\psi$ is said to be a \emph{conjugation} between $\Sigma_G$ and $\Sigma_H$.
\end{definition}

From equation (\ref{solu}), it follows that, if $\varphi_t^G$, $\varphi_t^H$ are the flows associated of the drifts of $\psi$-conjugated linear control systems $\Sigma_G$ and $\Sigma_H$, respectively, then 
$$\psi\circ\varphi_t^G=\varphi^H_t\circ\psi, \hspace{.5cm}\forall t\in\R,$$
which by \cite[Lemma 2.3]{DS}, gives us that 
\begin{equation}
\label{conjugation}
\psi(G^+)=H^+, \hspace{.5cm} \psi(G^0)=H^0\hspace{.5cm}\mbox{ and }\hspace{.5cm} \psi(G^-)=H^-.
\end{equation}

\subsection{LCSs with bounded orbits}

In this section, we prove that control sets whose orbits starting from the origin are bounded, imposing rigorous conditions on the topology of the group. We begin with a lemma concerning LCSs with elliptical drift.

\begin{lemma}
\label{elliptical}
    Let $\Sigma_G$ be a linear control system on a connected Lie group satisfying the LARC. If the drift of $\Sigma_G$ is elliptical, then $\mathrm{int}\OC^+(e)$ is a semigroup. In particular, if the orbits in $\OC^+(e)$ are bounded, the system $\Sigma_G$ is controllable, and $G$ is a compact group.
\end{lemma}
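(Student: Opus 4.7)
The plan is to first establish that $\mathrm{int}\OC^+(e)$ is closed under group multiplication by exploiting the recurrence of the elliptic flow $\varphi$, and then to combine this semigroup structure with the boundedness hypothesis to deduce both compactness of $G$ and controllability.

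For the semigroup property, I would start by observing that since $\XC = \XC_{\EC}$ is elliptic, the one-parameter subgroup $\{\rme^{t\DC}\}_{t\in\R}$ consists of isometries for some inner product on $\fg$ and therefore has compact closure in $\mathrm{GL}(\fg)$. This produces a sequence $s_k \to +\infty$ with $\rme^{s_k\DC}\to \id_\fg$, and the identity $\varphi_t(\exp Y) = \exp(\rme^{t\DC}Y)$ propagates this to $\varphi_{s_k}\to \id_G$ uniformly on compact subsets of $G$. Given $g,h\in S := \mathrm{int}\OC^+(e)$, I pick $t_1 > 0$ with $g\in\OC^+_{t_1}(e)$. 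By openness of $S$ and the convergence $\varphi_{-s_k}(h)\to h$, I select $T = s_k \geq t_1$ with $\varphi_{-T}(h)\in S$, so $\varphi_{-T}(h)\in\OC^+_\tau(e)$ for some $\tau > 0$. Proposition~\ref{properties}(1) then yields
\[
gh = g\cdot\varphi_T(\varphi_{-T}(h))\in \OC^+_T(e)\cdot\varphi_T(\OC^+_\tau(e)) = \OC^+_{T+\tau}(e)\subset \OC^+(e).
\]
Varying $h$ with $g$ fixed, the open set $gS$ lies in $\OC^+(e)$, hence inside $\mathrm{int}\OC^+(e) = S$. Thus $S\cdot S\subset S$, proving the semigroup property.

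Assume now that $\OC^+(e)$ is bounded. Then $S$ is a bounded open subsemigroup of $G$, so $\overline S$ is compact. A compact subsemigroup of a topological group is a subgroup: for any $g\in\overline S$ the sequence $\{g^n\}$ has an accumulation point, which forces both $e$ and $g^{-1}$ into $\overline S$. Hence $\overline S$ is a closed subgroup of $G$ containing the nonempty open set $S$; so $\overline S$ is open, and by connectedness of $G$ this forces $\overline S = G$. In particular $G$ is compact. For controllability, $S$ is now open and dense in the compact (hence Baire) space $G$. For any $g\in G$, the set $gS^{-1}$ is also open and dense, so $S\cap gS^{-1}\neq\emptyset$; choosing $s = gt^{-1}$ with $s,t\in S$ yields $g = st\in S\cdot S\subset S\subset\OC^+(e)$. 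Hence $\OC^+(e) = G$, which is the desired controllability of $\Sigma_G$.

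The main obstacle is the semigroup step: one must use the recurrence of the elliptic flow at arbitrarily large times to realign the composition formula of Proposition~\ref{properties}(1) with the plain group product $gh$, and the openness of $\mathrm{int}\OC^+(e)$ at $h$ is essential so that the preimage $\varphi_{-T}(h)$ still belongs to a reachable set. The remaining steps are structural, depending only on the facts that compact subsemigroups of topological groups are subgroups and that connected Lie groups have no proper open subgroups.
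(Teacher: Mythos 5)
Your semigroup argument is essentially the paper's: you use ellipticity to produce times $s_k\to+\infty$ at which the flow returns close to the identity, pull $h$ back by $\varphi_{-T}$ into $\inner\OC^+(e)$, and realign with Proposition~\ref{properties}(1); the observation that $g\cdot\inner\OC^+(e)$ is an open subset of $\OC^+(e)$, hence of its interior, is a clean way to land back in the interior. Your treatment of the bounded case, however, takes a genuinely different (and arguably slicker) route than the paper: instead of realizing the powers $g^n$ as concatenated trajectories, passing to the compact group $\overline{\{g^n\}}$, using torsion elements to get $e\in\inner\OC^+(e)$, and invoking Lemma~\ref{bound} for compactness, you note that $\overline{\inner\OC^+(e)}$ is a compact subsemigroup of $G$, hence a subgroup, hence an open closed subgroup, hence all of $G$ by connectedness; this yields compactness of $G$ at once and then density gives $\OC^+(e)=G$. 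That part is correct and avoids the paper's auxiliary Lemma~\ref{bound} entirely.

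The gap is the last sentence: $\OC^+(e)=G$ is reachability from the identity, not controllability. In this paper controllability means $G$ is a control set, i.e.\ (approximate) controllability from \emph{every} point; equivalently one needs both $\OC^+(e)=G$ and $\OC^-(e)=G$, since only then does concatenation through $e$ steer any $x$ to any $y$. Your density argument $S\cap gS^{-1}\neq\emptyset$ only re-proves $g\in S\subset\OC^+(e)$; the set $S^{-1}$ has no interpretation as a negative orbit, so nothing is said about steering points back to $e$. The paper closes this by running the same argument for $\inner\OC^-(e)$, and your proof admits the same fix: the time-reversed system is again linear with elliptic drift $-\XC$ satisfying the LARC, and since you have already shown $G$ is compact, its positive orbit from $e$ (which is $\OC^-(e)$ for $\Sigma_G$) is automatically bounded, so the identical semigroup-plus-compactness argument gives $\OC^-(e)=G$. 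As written, though, the conclusion ``controllable'' is asserted without justification.
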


\begin{proof}
    Let us denote by $\varphi$ the flow of the drift of $\Sigma_G$ and let us first show that, if $\varphi$ is elliptical, then $\inner\OC^+(e)$ is $\varphi$-invariant.

    Since $\inner\OC^+(e)$ is always $\varphi$-positively invariant, we only have to prove
    $$\varphi_{-t}(\inner\OC^+(e))\subset \inner\OC^+(e), \hspace{.5cm}\forall t<0.$$

    Now, the fact that $\varphi$ is elliptical gives us that, for any $g\in G$, there exists $t_k\rightarrow+\infty$ such that $\varphi_{-t_k}(g)\rightarrow g$. Let then $g\in\inner\OC^+(e)$ and $t<0$. By the previous, there exists $\tau>0$ large enough such that 
    $$-\tau<t \hspace{.5cm}\mbox{ and }\hspace{.5cm}\varphi_{-\tau}(g)\in\inner\OC^+(e).$$ 
    Hence, 
    $$\varphi_{t}(g)=\varphi_{t-\tau}(\varphi_{-\tau}(g))\in\varphi_{t-\tau}(\inner\OC^+(e))\subset\inner\OC^+(e),$$
    where for the last inclusion we used that $t+\tau>0$ and that $\inner\OC^+(e)$ are $\varphi$-positively-invariant.

    Now, if $g, h\in\inner\OC^+(e)$, there exist $t_1>0$ and $u_1\in\UC$ such that $g=\phi(t_1, e, u_1)$. By the $\varphi$-invariance of $\inner\OC^+(e)$ there also exists $t_2>0$ and $u_2\in\UC$ such that $\varphi_{-t_1}(h)=\phi(t_2, e, u)$, implying, by Proposition \ref{properties} that
    $$gh=g\varphi_{t_1}(\varphi_{-t_1}(h_1))=\phi(t_1, e, u_1)\varphi_{t_1}(\phi(t_2, e, u))=\phi(t_1+t_2, e, u)\in\inner\OC_{t_1+t_2}^+(e)\subset\inner\OC^+(e),$$
    showing that $\inner\OC^+(e)$ is a semigroup.

    Repeating the previous construction allows us to show that, for any  $g\in\inner\OC^+(e)$ and any $n\in\N$ there exist $u_n\in\UC$ and $t_n>0$, obtained by concatenation, such that
    $$g^n=\phi(t_n, e, u_n)\in\inner\OC^+(e).$$
    In particular, this allows us to obtain a control $u\in\UC$ such that 
    $$\{g^n, n\in\N\}\subset\{\phi(t, e, u), t\geq 0\}.$$
 Now, if the orbits in $\OC^+(e)$ are bounded, the subset $H=\overline{\{g^n, n\in\N\}}$ is a compact Lie group. Since $H\cap\inner\OC^+(e)\neq \emptyset$ and the set of elements with finite order on a compact Lie group is compact, there exists $h\in H\cap\inner\OC^+(e)$ satisfying $h^n=e$ for some $n\in\N$.
 Hence, 
 $$e=h^n\in\inner\OC^+(e)\hspace{.5cm}\implies\hspace{.5cm}G=\inner\OC^+(e).$$
 The same reasoning for $\inner\OC^-(e)$ allows us to conclude that $G=\inner\OC^-(e)$, showing the controllability of the system. Again, the hypothesis on the boundedness of the orbits implies that $\{g^n, n\in\N\}$ is bounded for any $g\in G$, which by Lemma \ref{bound} implies that $G$ is a compact group, concluding the proof.
\end{proof}

\begin{theorem}
\label{main}
    Let $G$ be a connected Lie group and $\Sigma_G$ a linear control system on $G$ satisfying the LARC. The following conditions are equivalent:
    \begin{enumerate}
        \item $G=G^{-, 0}$ and $G^0$ is a compact subgroup;
        \item $\mathcal{O}^+(e)$ is bounded;
        \item For any $u\in \UC$, the trajectory $\{\phi(t, e, u), t\geq 0\}$ is bounded.
        \end{enumerate}
\end{theorem}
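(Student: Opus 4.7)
The plan is to prove the chain $(2)\Rightarrow(3)\Rightarrow(1)\Rightarrow(2)$. The implication $(2)\Rightarrow(3)$ is immediate, since for any fixed $u\in\UC$ the trajectory $\{\phi(t,e,u):t\geq 0\}$ is contained in $\{e\}\cup\OC^+(e)$.

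For $(1)\Rightarrow(2)$, I would exploit the left-invariant metric $\varrho$ constructed in Section~2.2 on $G^-$, which satisfies $\varrho(\varphi_t(g),\varphi_t(h))\leq c^{-1}e^{-\lambda t}\varrho(g,h)$ for $g,h\in G^-$, $t\geq 0$, and under which the conjugations $C_g$ for $g\in G^0$ and the elliptic flow $\varphi^{\EC}_t$ act as isometries. Using the semidirect decomposition $G=G^-\rtimes G^0$ (valid because $G^0$ normalizes $G^-$ by Proposition~\ref{dynamical}(2) and $G^-\cap G^0=\{e\}$ by part~(3)), I decompose each trajectory as $\phi(t,e,u)=a(t)b(t)$ with $a(t)\in G^-$ and $b(t)\in G^0$. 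The factor $b(t)$ lies in compact $G^0$ and is automatically bounded. For $a(t)$, a variation-of-constants representation based on the right-invariance of the $Y^i$ expresses $a(t)$ as an integral whose propagator decays as $e^{-\lambda(t-s)}$ on $G^-$ and whose integrand is uniformly bounded by compactness of the control range $\Omega$ and of $G^0$; integration yields a uniform bound on $\varrho(a(t),e)$ independent of $u$ and $t$, showing $\OC^+(e)$ is bounded.

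For the main direction $(3)\Rightarrow(1)$, the starting observation is that LARC forces $\inner\OC^+_\tau(e)\neq\emptyset$ for every $\tau>0$. For each $g_0=\phi(\tau,e,u_0)$ in this open set, concatenating $u_0$ on $[0,\tau]$ with the zero control on $(\tau,\infty)$ yields $\phi(\tau+s,e,\tilde u)=\varphi_s(g_0)$, which is bounded for $s\geq 0$ by hypothesis~$(3)$. Hence every point in an open subset of $G$ has bounded forward $\varphi$-orbit. I would then show that bounded forward $\varphi$-orbit forces $g\in G^{-,0}$: the $\omega$-limit $\omega(g)$ lies in $\RC(\varphi)=\fix(\varphi^{\HC})\cap\fix(\varphi^{\NC})\subset G^0$ by~\eqref{recurrent}, and for any $g_*\in\omega(g)$ and sequence $t_k\to\infty$ with $\varphi_{t_k}(g)\to g_*$, the isometric nature of $\varphi|_{\omega(g)}$ gives a further subsequence with $\varphi^{\EC}_{t_k}(g_*)\to g_*$, so that $\varphi_{t_k}(gg_*^{-1})\to e$; a local analysis near $e$ combined with Theorem~\ref{stable} then yields $gg_*^{-1}\in G^-$, whence $g\in G^-G^0=G^{-,0}$. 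Consequently $\inner\OC^+_\tau(e)\subset G^{-,0}$; since $G^{-,0}$ is closed by Proposition~\ref{dynamical}(4) and has nonempty interior, it is clopen in the connected group $G$, so $G=G^{-,0}$.

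With $G=G^{-,0}$, the subgroup $G^-$ is normal in $G$ and the quotient map $\pi:G\to G/G^-$ identifies $G/G^-$ with the (automatically connected) subgroup $G^0\subset G$ via $g=ac\mapsto c$. The projected LCS $\Sigma_{G^0}$ satisfies LARC, inherits hypothesis~$(3)$, and has drift with trivial hyperbolic part. Repeating the openness argument inside $G^0$: an open subset of $G^0$ consists of points with bounded forward $\varphi|_{G^0}$-orbit, and Lemma~\ref{boundedG^0} places this open set inside $\fix(\varphi^{\NC}|_{G^0})$; as a closed subgroup with nonempty interior in connected $G^0$, this fixed-point set equals $G^0$, forcing $\varphi^{\NC}|_{G^0}=\id$ and the drift on $G^0$ to be elliptic. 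Applying the LCS version of Lemma~\ref{elliptical} to $\Sigma_{G^0}$ (its proof uses exactly the bounded-single-trajectory information provided by~$(3)$) yields $G^0$ compact, finishing the argument. The principal obstacle is the middle step of the third paragraph, namely the nonlinear-group analogue of Lemma~\ref{beta}: proving that bounded forward $\varphi$-orbit implies $g\in G^{-,0}$ in the general, possibly nonsolvable, setting, where the global triple decomposition $G=G^-G^+G^0$ of Proposition~\ref{dynamical}(5) is unavailable and one must instead combine Theorem~\ref{stable}, equation~\eqref{recurrent}, and local dynamics near $e$.
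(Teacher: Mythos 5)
Your proposal follows the same overall architecture as the paper (contracting metric on $G^-$ for $(1)\Rightarrow(2)$; concatenation with the zero control, Lemma~\ref{boundedG^0}, and the quotient $G/G^-\simeq G^0$ plus Lemma~\ref{elliptical} for $(3)\Rightarrow(1)$), but the step you yourself flag as the ``principal obstacle'' is in fact the heart of $(3)\Rightarrow(1)$, and your sketch does not close it. Subsequential convergence $\varphi_{t_k}(gg_*^{-1})\to e$ cannot be fed into Theorem~\ref{stable}, which characterizes $G^-$ by convergence as $t\to+\infty$ along \emph{all} times; the unspecified ``local analysis near $e$'' is exactly the missing content. The paper avoids proving the global claim ``bounded forward $\varphi$-orbit $\Rightarrow g\in G^{-,0}$'' altogether: since $e\in\overline{\inner\OC^+(e)}$, the set $V=G^+G_1^{-,0}\cap\inner\OC^+(e)$ is a nonempty open set on which the unique local triple decomposition $g=g_1g_2g_3$ ($g_1\in G^+$, $g_2\in G^0_1$, $g_3\in G^-$) is available; boundedness of the forward orbit, $\varphi$-invariance of the factors and expansiveness on $G^+$ force $g_1=e$, and Lemma~\ref{boundedG^0} places the $G^0$-factor in $\RC(\varphi)$, so $V\subset G^-\RC(\varphi)$. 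Because $\RC(\varphi)\subset G^0$ normalizes $G^-$, the set $G^-\RC(\varphi)$ is a subgroup with nonempty interior of the connected group $G$, hence equals $G$; this yields simultaneously $G=G^{-,0}$ and $G^0=\RC(\varphi)$, i.e.\ ellipticity of $\varphi|_{G^0}$ (Theorem~\ref{elliptical}), after which the quotient argument and Lemma~\ref{elliptical} give compactness of $G^0$ as you describe. Your $\omega$-limit detour could likely be repaired by running this same componentwise local argument, but as written the implication $(3)\Rightarrow(1)$ is not proved.

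There is also a soft spot in $(1)\Rightarrow(2)$: the ``variation-of-constants representation'' of the $G^-$-component does not exist verbatim, since $G^-$ is a (generally nonabelian) nilpotent group with no linear structure in which such an integral formula makes sense. The paper's rigorous substitute is discrete: by the cocycle property, $\phi(t,e,u)$ is a product of time-one pieces lying in $\overline{\OC^+_1(e)}\subset KG^0$ with $K\subset G^-$ compact; pushing all $G^0$-factors to the right (using that $G^0$ normalizes $G^-$ and that conjugation by $G^0$ is a $\varrho$-isometry) leaves a $G^-$-component whose distance to $e$ is bounded by the geometric series $\sum_{j\ge 0}\mathrm{e}^{-j\lambda}\,\mathrm{diam}(K)$, uniformly in $u$ and $t$. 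Your ingredients (the contracting left-invariant metric, isometric conjugations, compactness of the range) are the right ones, but the integral step should be replaced by this concatenation estimate.
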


\begin{proof}
$(1\Rightarrow 2)$  Since $G^0$ is compact and acts on $G^-$ by conjugation, there exists a left-invariant metric $\varrho$ on $G^-$ satisfying 
    $$\varrho(C_k(g), C_k(h))=\varrho(g, h), \hspace{.5cm}\forall g, h\in G^-, k\in G^0.$$

    Moreover, by Section 2.2, for this metric, there exist $c\geq 1$, $\lambda>0$ such that 
$$\varrho(\varphi_t(g), \varphi_t(h))\leq c\rme^{-\lambda t}\varrho(g, h), \hspace{.5cm}\forall g, h\in G^-, t\geq0.$$

Now, the fact that the range of the system is bounded implies that $\overline{\OC^+_1(e)}$ is a compact subset of $G$. Hence, there exists $K\subset G^-$ compact, such that $\overline{\OC^+_1(e)}\subset KG^0$. Let $u\in\UC$ and $t>0$ and consider $m\in\N$, $r\in [0, 1)$ such that $t=m+r$. Applying the cocycle property $m$-times gives us that
$$\phi(t, e, u)=\phi_{1, \theta_{m-1+r}u}(e)\varphi_1\left(\phi_{1, \theta_{m-2+r}u}(e)\right)\varphi_2\left(\phi_{1, \theta_{m-3+r}u}(e)\right)\ldots\varphi_{m-1}\left(\phi_{1, \theta_ru}(e)\right)\varphi_m\left(\phi_{r, u}(e)\right).$$
Since $\phi_{1, \theta_{m-j+r}u}(e), \phi_{r, u}(e)\in \OC^+_1(e)$, there exists $a_j\in K$, $b_j\in G^0$ such that 
$$\phi_{1, \theta_{m-j+r}u}(e)=g_{j-1}h_{j-1}, \hspace{.5cm}j=1, \ldots, m\hspace{.5cm}\mbox{ and }\hspace{.5cm}\phi_{r, u}(e)=g_mh_m,$$
and hence
$$\phi(t, e, u)=g_0h_0\varphi_1(g_1h_1)\varphi_2(g_2h_2)\ldots\varphi_m(g_mh_m).$$
By setting $k_0=e$ and for $j=1, \ldots, m$, $k_j:=h_0\varphi_1(h_1)\cdots \varphi_j(h_j)\in G^0$, the fact that $G^0$ normalizes $G^-$ allows us to write $\phi(t, e, u)$ into its $G^{-}$ and $G^0$ components as
$$\phi(t, e, u)=\underbrace{(k_0g_0k_0^{-1})\varphi_1(k_1g_1k_1^{-1})\cdots \varphi_m((k_mg_mk_m^{-1}))}_{:=\phi^-(t, e, u)\in G^-}k_m.$$
Using the metric previously constructed, gives us 
$$\varrho(\phi^{-}(t, e, u), e)\leq\sum_{j=0}^m\varrho(k_j\varphi_j(g_j)k_j^{-1}, e)=\sum_{j=0}^m\varrho(\varphi_j(g_j), e)$$
$$\leq c^{-1}\sum_{j=0}^m\rme^{-j\lambda}\varrho(g_j, e)\leq c^{-1}\mathrm{diam}(K)\sum_{j=0}^m(\rme^{-\lambda})^j\leq c^{-1}\cdot\mathrm{diam}(K)\sum_{j=0}^{+\infty}(\rme^{-\lambda})^j=\frac{c^{-1}\cdot\mathrm{diam}(K)}{1-\rme^{-\lambda}}:=R,$$
showing that $\phi^-(t, e, u)\in \overline{B_{G^-}(e, R)}$, and hence,  $\phi(t, e, u)\in \overline{B_{G^-}(e, R)} G^0.$ By the arbitrariness of $u\in\UC$ and $t>0$, we conclude that
$$\mathcal{O}^+(e)\subset \overline{B_{G^-}(e, R)}G^0,$$
showing that $\mathcal{O}^+(e)$ is  bounded.

\bigskip

$(2\Rightarrow 3)$ direct.

\bigskip

$(3\Rightarrow 1)$ Let us assume now that the orbits of the linear control system starting at the origin are bounded. Since the identity element belongs to the closure of $\mathrm{int}\OC^+(e)$, it holds that
$$V=G^+G_1^{-, 0}\cap\mathrm
{int}\OC^+(e),$$
is a nonempty open set of $G$. In particular, for any $g\in V$, there exists $u\in\UC$, $\tau>0$ such that $g=\phi(\tau, e, u)$. By concatenating $u$ with the zero control, we obtain that  
$$\{\varphi_t(g), t\geq 0\}\subset\phi(t, e, u'), t\geq 0\},$$
showing that $\{\varphi_t(g), t\geq 0\}$
is bounded. As in the proof of Theorem \ref{stable} we conclude that $g=g_1g_2$ with $g_1\in G^-$ and $g_2\in G^0$, with  $\{\varphi_t(g_2), t\geq 0\}$ is bounded. By Lemma \ref{boundedG^0} we conclude that $g_2\in\RC(\varphi)$ implying that 
$$ V\subset G^-\RC(\varphi).$$
Therefore, $G^-\RC(\varphi)$ has nonempty interior in $G$ and hence $G=G^-\RC(\varphi)$, or equivalently, 
$$G=G^{-, 0}\hspace{.1cm}\mbox{ and }\varphi|_{G^0}\hspace{.1cm}\mbox{ is elliptcal},$$
where the last assertion follows from Theorem \ref{elliptical}.

Since $G=G^{-, 0}$, the subgroup $G^-$ is a normal subgroup and $G/G^-\simeq G^0$. Moreover, the fact that $G^-$ is $\varphi$-invariant, allows us to induce a LCS on $G/G^-$ whose drift is, by the previous, elliptical. By Lemma \ref{elliptical} we conclude that $G^0$ is a compact subgroup, thus concluding the proof.
\end{proof}

In what follows, we formally define the concept of control sets (for more on control sets, the reader can consult \cite{FCWK}).

\begin{definition}
\label{controlset}
A subset $D \subset G$ is called a \emph{control set} of the linear control system $\Sigma_G$ if it satisfies the following properties:

\begin{itemize}
    \item[(i)] \emph{(Weak invariance)} For every $x \in M$, there exists ${\bf u} \in \mathcal{U}$ such that 
    \(\phi(\R^+, x, {\bf u}) \subset D\);
    \item[(ii)] \emph{(Approximate controllability)} $D \subset \overline{\mathcal{O}^+(x)}$ for every $x \in D$;
    \item[(iii)] \emph{(Maximality)} $D$ is maximal with respect to properties (i) and (ii).
\end{itemize}
\end{definition}

In particular, when the whole state space $G$ is a control set, the system $\Sigma_G$ is said to be \emph{controllable}. 
Control sets are pairwise disjoint and contain several critical dynamical properties of the system, such as fixed and recurrent points, as well as periodic and bounded orbits. 
Moreover, exact controllability holds in the interior of a control set; that is, points in the interior can be steered into one another by a solution of the system in positive time.

As a direct consequence of Theorem~\ref{main}, we get the following result, which ensures the existence of control sets.

\begin{corollary}
    If $\Sigma_G$ is an LCS on a connected Lie group satisfying the LARC, then  
    $$\OC^+(e)\hspace{.1cm} \mbox{ is bounded}\hspace{.5cm}\iff\hspace{.5cm}\overline{\OC^+(e)} \hspace{.1cm}\mbox{ is a compact control set}.$$
    In particular, if $\Sigma_G$ is controllable, then $G$ is a compact group.
\end{corollary}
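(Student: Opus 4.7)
The nontrivial direction is the forward one, since if $\overline{\OC^+(e)}$ is a compact control set then $\OC^+(e)\subset\overline{\OC^+(e)}$ is automatically bounded. Assuming $\OC^+(e)$ bounded, my plan is to verify each of the three conditions of Definition~\ref{controlset} for $\overline{\OC^+(e)}$, leveraging the structural conclusions of Theorem~\ref{main}: namely $G=G^{-,0}$ with $G^0$ compact and $\varphi|_{G^0}$ elliptic. Compactness of $\overline{\OC^+(e)}$ is immediate from boundedness in the locally compact group $G$. Weak invariance I would establish using the zero control: for $x=\phi(\tau,e,u_0)\in\OC^+(e)$ the cocycle identity yields $\varphi_t(x)=\phi(\tau+t,e,w)\in\OC^+(e)$ with $w$ the concatenation of $u_0$ followed by $0$, and this extends to all $x\in\overline{\OC^+(e)}$ by continuity of $\varphi_t$. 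Maximality will follow a posteriori: any $D'\supsetneq\overline{\OC^+(e)}$ satisfying approximate controllability must contain $e$, and applying property (ii) with $x=e$ forces $D'\subset\overline{\OC^+(e)}$, a contradiction.

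The hard part is approximate controllability, which I intend to reduce to proving $e\in\overline{\OC^+(x)}$ for each $x\in\overline{\OC^+(e)}$. Granting this, for any $y=\phi(s,e,u)\in\OC^+(e)$ and any sequence $x_n\in\OC^+(x)$ with $x_n\to e$, continuous dependence on initial conditions gives $\phi(s,x_n,u)\to y$, while the semigroup property of orbits keeps $\phi(s,x_n,u)$ inside $\OC^+(x)$; taking closure then yields $\overline{\OC^+(e)}\subset\overline{\OC^+(x)}$. To establish $e\in\overline{\OC^+(x)}$ itself, I would pass to the quotient $\pi:G\to G/G^-\cong G^0$, the last identification being valid because $G=G^-G^0$ with $G^-\cap G^0=\{e\}$ by Proposition~\ref{dynamical}. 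Since $G^-$ is $\varphi$-invariant and normal in $G=G^{-,0}$, the map $\pi$ conjugates $\Sigma_G$ to an LCS on the compact group $G^0$ with drift $\varphi|_{G^0}$ elliptic (by~(\ref{conjugation})), inheriting the LARC and having automatically bounded positive orbits. Lemma~\ref{elliptical} then yields controllability of this induced system on $G^0$, so there exist $T>0$ and $u\in\UC$ steering $\pi(x)$ exactly to the identity, i.e., $\phi(T,x,u)\in G^-$. Applying the zero control afterwards, $\varphi_s(\phi(T,x,u))\to e$ as $s\to\infty$ by Theorem~\ref{stable}, since $G^-$ is the stable manifold at $e$; concatenating the two phases drives $x$ arbitrarily close to $e$.

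For the \emph{in particular} clause, controllability means $\overline{\OC^+(e)}=G$, so compactness of this control set gives compactness of $G$ at once. The delicate step in the whole proof is the approximate controllability argument above, which requires combining two distinct features of the decomposition $G=G^-G^0$---controllability of the projected elliptic system on $G^0$ (to eliminate the central component) and the contraction of $\varphi$ on $G^-$ (to drive the stable component to the identity)---rather than treating them in isolation.
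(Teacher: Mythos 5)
Your proposal is correct and takes essentially the same route as the paper: apply Theorem \ref{main} to get $G=G^{-,0}$ with $G^0$ compact, pass to the induced system on $G/G^-\simeq G^0$, use Lemma \ref{elliptical} to obtain controllability there, steer any $x$ into $G^-$ and then follow the zero control so that $e\in\overline{\OC^+(x)}$, from which approximate controllability, weak invariance and maximality of $\overline{\OC^+(e)}$ (and compactness of $G$ under controllability) all follow. Your write-up simply makes explicit the verifications of the control-set axioms that the paper leaves terse.
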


\begin{proof}
    We only have to show that $\overline{\OC^+(e)}$ is a control set when $\OC^+(e)$ is bounded. Under such an assumption, Theorem \ref{main} implies that $G=G^{-, 0}$ with $G^0$ compact. Since the induced system on $G/G^-\simeq G^0$ is controllable, for any $x\in G$, there exists $t>0$, $u\in\UC$ and $g\in G^-$ such that $\phi(t, x, u)=h.$ Concatenation with the zero control, gives us that
    $$\forall \tau>0, \hspace{.5cm}\OC^+(x)\ni \phi(\tau+t, x, u)=\varphi_{\tau}(h)\rightarrow e\hspace{.5cm}\implies \hspace{.5cm}e\in\overline{\OC^+(x)}, \hspace{.5cm}\forall x\in G.$$ 
The previous imply that 
$$\forall x\in \overline{\OC^+(e)}, \hspace{.5cm}\overline{\OC^+(x)}=\overline{\OC^+(e)},$$
implying that $\overline{\OC^+(e)}$ satisfies properties 1. and 2. in Definition \ref{controlset}. Moreover, the previous forces also the maximality of $\overline{\OC^+(e)}$, concluding the proof.   
\end{proof}

\section{Stabilization of linear control systems on Lie groups}

The result of the previous section allows us to extend and characterize the classical concepts of inner stability and BIBO stability for linear control systems on Lie groups.

\subsection{Internal stability}

In this section we extend the concept of internal stability presented in ??? to the context of linear control systems on Lie groups and show that this concept depends heavily on the dynamical behavior of the drift of the system.

\begin{definition}
    Let $\Sigma_G$ be a linear control system on a connected Lie group $G$ and denote by $\{\varphi_t\}_{t\in\R}$ the flow of the drift of $\Sigma_G$. We say that:
\begin{itemize}
    \item[(i)] The linear control system $\Sigma_G$ is said to be \emph{internally stable} if 
    $$\{\varphi_t(g), t\geq 0\}\mbox{ is bounded } \forall g\in G;$$
     \item[(ii)] The linear control system $\Sigma_G$ is said to be \emph{internally asymptotically stable} if 
    $$\forall g\in G, \hspace{.5cm} \varphi_t(g)\rightarrow e, \hspace{.5cm}t\rightarrow+\infty;$$
    \item[(iii)] The linear control system $\Sigma_G$ is said to be \emph{internally unstable}   
\end{itemize}
\end{definition}

As in the Euclidean case, internal stability has nothing to do with any other part of $\Sigma_G$ other than its drift. The next result characterizes internal stability in terms of the dynamical subgroups of the drift.

\begin{theorem}
    \label{internal}
    For a linear control system $\Sigma_G$, the following statments holds:
    \begin{itemize}
        \item[(i)] $\Sigma_G$ is internally unstable if $G^+$ is not trivial;
        \item[(ii)] $\Sigma_G$ is internally asymptotically stable if $G=G^-$;
        \item[(iii)] $\Sigma_G$ is internally stable if $G=G^{-, 0}$ and $\varphi|_{G^0}$ is elliptical;
        \item[(iv)] $\Sigma_G$ is internally unstable if $\varphi|_{G^0}$ is not elliptical.
    \end{itemize}
\end{theorem}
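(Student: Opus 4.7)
The four items isolate the contributions of $G^+$, $G^0$, and $G^-$ to the long-time behaviour of the drift flow, so the plan is to reduce each assertion to one of the structural results already proved: Theorem~\ref{stable}, which identifies $G^\pm$ with the unstable and stable manifolds of $\varphi$ at $e$, and Theorem~\ref{elliptical}, which identifies ellipticity of $\varphi|_{G^0}$ with boundedness of all positive orbits on $G^0$. Since the preceding definition truncates item~(iii), I will read \emph{internally unstable} as the negation of internal stability, namely the existence of at least one $g\in G$ with $\{\varphi_t(g):t\ge 0\}$ unbounded.

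Items (i), (ii) and (iv) are essentially immediate. For (ii), the hypothesis $G=G^-$ places all of $G$ in the stable manifold at $e$, so Theorem~\ref{stable} directly yields $\varphi_t(g)\to e$ for every $g\in G$, which is internal asymptotic stability. For (i), I pick $X\in\fg^+\setminus\{0\}$; estimate~\eqref{expanding} gives $\|\rme^{t\DC}X\|\to\infty$, and because $\exp:\fg^+\to G^+$ is a diffeomorphism by Proposition~\ref{dynamical}(1), the curve $\varphi_t(\exp X)=\exp(\rme^{t\DC}X)$ leaves every compact subset of $G$, so $\exp X$ witnesses internal instability. For (iv), Theorem~\ref{elliptical} (the central-subgroup result from Section~3) directly produces a point $g\in G^0\subset G$ whose positive orbit is unbounded whenever $\varphi|_{G^0}$ fails to be elliptic.

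Only item~(iii) requires a short argument. Given $G=G^{-,0}$, Proposition~\ref{dynamical} (parts~2 and~3) provides a unique decomposition $g=g_1g_2$ with $g_1\in G^-$ and $g_2\in G^0$, and the $\varphi$-invariance of both subgroups combined with $\varphi_t$ being an automorphism yields $\varphi_t(g)=\varphi_t(g_1)\varphi_t(g_2)$. Theorem~\ref{stable} makes $\{\varphi_t(g_1):t\ge 0\}$ relatively compact (it converges to $e$), and Theorem~\ref{elliptical} applied to the elliptic flow $\varphi|_{G^0}$ makes $\{\varphi_t(g_2):t\ge 0\}$ bounded in $G^0$; continuity of multiplication then makes the product of these two relatively compact sets relatively compact in $G$, so the orbit of $g$ is bounded. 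No step presents a real obstacle: the heavy lifting was done in Section~3, and the only point needing care is the routine observation in (iii) that boundedness passes through the continuous group operation.
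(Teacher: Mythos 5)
Your proposal is correct and follows essentially the same route as the paper, which proves (i)--(ii) from Theorem~\ref{stable} and (iii)--(iv) from Theorem~\ref{elliptical} on bounded orbits in $G^0$. You merely make explicit the routine details the paper leaves implicit — the explicit expanding witness in $G^+$ for (i) (where closedness of $G^+$, Proposition~\ref{dynamical}(4), justifies passing from unboundedness in $\fg^+$ to unboundedness in $G$) and the $G^-G^0$ product decomposition in (iii).
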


\begin{proof}
    The items (i) and (ii) follow from Theorem \ref{stable}, and the items (iii) and (iv) follow from Theorem \ref{elliptical}.  
\end{proof}

\subsection{BIBO stability}

We now consider a concept of stability that takes into account the control part of a linear control system.

\begin{definition}
    Let $G, H$ be connected Lie groups and $F:G\rightarrow H$ a group homomorphism. We say that a linear control system $\Sigma_G$ on $G$ is bounded input-bounded output stable  (abrev. BIBO stable) relative to $F$, if 
    $$\forall u\in\UC, \hspace{.5cm}\{F(\phi(t, e, u)), t\geq 0\}\hspace{.5cm}\mbox{ is bounded}.$$
\end{definition}


 \begin{theorem}
     Let $G, H$ be connected Lie groups and $F:G\rightarrow H$ be a homomorphism. Let $\Sigma_G$ be a linear control system on $G$ and assume that $\ker F$ is invariant by the flow of the drift. Then, $\Sigma_G$ is BIBO stable relative to $F$ if and only if 
     $$G^+\subset \ker F\hspace{.5cm}\mbox{ and }\hspace{.5cm} F(G^0)\hspace{.3cm}\mbox{ is a compact subgroup of the image }F(G).$$
 \end{theorem}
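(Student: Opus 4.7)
The plan is to reduce the statement to an application of Theorem \ref{main} on an induced linear control system on the image subgroup $F(G)$.

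Since $\ker F$ is a closed normal subgroup of $G$ invariant under the flow $\varphi$ of the drift, the derivation $\DC$ descends to a derivation on the Lie algebra of $F(G)$, and $\Sigma_G$ pushes forward through $F$ to a linear control system $\Sigma_{F(G)}$ on $F(G)$, with $F: G \to F(G)$ playing the role of a conjugation in the sense of Definition \ref{def:conjugated}. Consequently, for every $u \in \UC$ and every $t \in \R$,
$$F(\phi(t, e, u)) = \phi^{F(G)}(t, e, u),$$
so BIBO stability of $\Sigma_G$ relative to $F$ is equivalent to boundedness of $\{\phi^{F(G)}(t, e, u),\, t \geq 0\}$ for every $u \in \UC$. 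By the equivalence $(2 \iff 3)$ in Theorem \ref{main}, this is the same as the full orbit $\OC^+(e)$ of the induced system being bounded.

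Applying Theorem \ref{main} to $\Sigma_{F(G)}$ and using equation (\ref{conjugation}) to identify the dynamical subgroups,
$$F(G)^+ = F(G^+), \qquad F(G)^0 = F(G^0), \qquad F(G)^- = F(G^-),$$
BIBO stability is equivalent to
$$F(G) = F(G^-)\, F(G^0) \qquad \mbox{and} \qquad F(G^0) \text{ is a compact subgroup of } F(G).$$
The second condition is already the compactness assertion in the theorem. For the first, I would argue as follows. Assuming $F(G) = F(G^-) F(G^0)$, Proposition \ref{dynamical}(3) applied inside $F(G)$ gives $F(G^+) \cap F(G^-)F(G^0) = \{e\}$, so $F(G^+) = \{e\}$, and since $G^+$ is connected this is equivalent to $G^+ \subset \ker F$. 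Conversely, if $G^+ \subset \ker F$, then $\fg^+ \subset \ker dF$, so the decomposition $\fg = \fg^- \oplus \fg^0 \oplus \fg^+$ yields $dF(\fg) = dF(\fg^-) + dF(\fg^0)$. Hence $F(G^-) F(G^0)$ has the same Lie algebra as the connected group $F(G)$, forcing the equality $F(G) = F(G^-) F(G^0)$.

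The main technical subtlety is the first step: verifying that $\Sigma_G$ genuinely descends to a linear control system on $F(G)$, and that boundedness of $F$-images in $H$ is equivalent to boundedness in the intrinsic Lie subgroup topology of $F(G)$ — a delicate point if $F(G)$ is not closed in $H$, but one that is resolved by applying the conjugation framework of Section 4 with the surjective homomorphism $F: G \to F(G)$. Once this framework is in place, the remainder of the argument is a direct translation via equation (\ref{conjugation}).
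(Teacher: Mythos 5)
Your proof follows the paper's argument essentially verbatim: pass to the induced system on $F(G)\simeq G/\ker F$ via the conjugation framework, apply Theorem \ref{main} to it, and translate through relation (\ref{conjugation}). The only difference is that you spell out the equivalence $F(G)=F(G^-)F(G^0)\iff G^+\subset\ker F$, which the paper leaves implicit in ``the result follows,'' and your argument for it (Proposition \ref{dynamical}(3) in one direction, the Lie algebra decomposition $dF(\fg)=dF(\fg^-)+dF(\fg^0)$ in the other) is correct.
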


\begin{proof}
    Since $\mathrm{ker}F$ is $\varphi$-invariant, the system $\Sigma_G$ induces a linear control system $\Sigma_{F(G)}$ on the image $F(G)\simeq G/\ker F$. Hence, $\Sigma_G$ is BIBO stable if and only if $\{\phi^{F(G)}(t, x, u), t\geq 0\}$ is bounded for any $u\in\UC$, where $\phi^{F(G)}$ is the solution of the induced system. However, by Theorem \ref{main}, $\{\phi^{F(G)}(t, x, u), t\geq 0\}$ is bounded for any $u\in\UC$ if and only if $F(G)=F(G)^{-, 0}$ and $F(G)^0$ is a compact subgroup. Since, by relation (\ref{conjugation}), it holds that 
    $$F(G^-)=F(G)^-, \hspace{.5cm}F(G^+)=F(G)^+ \hspace{.5cm}\mbox{ and } \hspace{.5cm}F(G^0)=F(G)^0,$$
    the result follows.
\end{proof}

\begin{remark}
The invariance of $\ker F$ can be verified in many cases, such as:
\begin{enumerate}
    \item If $G$ is a semisimple Lie group, any flow of automorphisms is inner, hence, any normal subgroup is invariant;
    \item If $\ker F$ is such that $\mathrm{Lie}(\ker F)$ is a term of the derivative series or of the central descending series, the system is BIBO stable.
\end{enumerate}
\end{remark}

\end{document}